\newtheorem{lemma}{Lemma}[section]
\newtheorem{theorem}[lemma]{Theorem}
\newtheorem{prop}[lemma]{Proposition}
\newtheorem{rem}[lemma]{Remark}
\theoremstyle{definition}
\newtheorem*{MT}{Main Theorem}
\numberwithin{equation}{section}
\numberwithin{lemma}{section}
\def\R{{\mathbb R}}
\newcommand{\ep}{\epsilon}
\newcommand{\p}{\partial}
\newcommand{\om}{\omega}
\begin{document}

\title[Small Breathers]{Small Generalized Breathers with Exponentially
Small Tails for Klein-Gordon Equations}


%

\author[Lu]{Nan Lu}
\address{Department of Mathematics and Statistics \\
University of Massachusetts--Amherst\\ Amherst, MA, 01003}
\email{nanlu@math.umass.edu}
\maketitle

\begin{abstract}
We consider a class of nonlinear Klein-Gordon equation
$u_{tt}=u_{xx}-u+f(u)$ and show that generically there exist small breathers with
exponentially small tails.
\end{abstract}

\section{Introduction}\label{In}
The nonlinear Klein-Gordon equation
\begin{equation}\label{eq1}
u_{tt}=u_{xx}-u+f(u)\ , \ x\in\R,
\end{equation}
is an important physical model to study relativistic electrons. Classical examples include Sine-Gordon equation and $\phi^4$-model. It is well known that the Sine-Gordon equation, where $f(u)=u-\sin u$, has a family of breather type solutions, which are periodic in time and localized in space. On the one hand, as shown by Birnir-McKean-Weinstein \cite{BMW94} and Denzler \cite{D97},these breathers are rigid in the sense that they do not persist under small perturbations to the sine-Gordon equation. On the other hand, Kruskal-Segur \cite{HK88} use a formal asymptotic expansion to show that the $\phi^4$ model, which can be viewed as a perturbation to the sine-Gordon equation for small amplitude waves, admits  breathers with exponentially small tails. In this manuscript, we carry out rigorous analysis to obtain results in \cite{HK88}. In other words, we show that \eqref{eq1} has a family of small amplitude periodic solutions, which have exponentially small tails with respect to the amplitude of the solution. Our main result is the following.
\begin{MT}
Assume $f$ is odd and holomorphic such that $f(0)=0,f'''(0)>0$.
There exist $\ep_0>0$ and $c>0$ such that for any $\ep\in(0,\ep_0)$,
\eqref{eq1} has a family of solutions $u(x,t)=O(\ep)$ which are $\frac{2\pi}{\sqrt{1-\ep^2}}$-periodic and odd in $t$  such that as $|x|\to\infty$,
\[\Big|u(x,t)\Big|_{H_t^1}\leq Ce^{-\frac c\ep}.\]
\end{MT}

It is advantageous to rewrite \eqref{eq1} by using the spatial dynamics method as an infinite dimensional dynamical system 
\begin{equation}\label{eq1.1}
\left\{\begin{aligned}
&\begin{pmatrix}
u\\v
\end{pmatrix}_t=\begin{pmatrix}
0 & 1\\
1+\p_{xx} & 0
\end{pmatrix}\begin{pmatrix}
u \\ v
\end{pmatrix}-\begin{pmatrix}
0 \\ f(u)
\end{pmatrix}\\
&u(\cdot,t),v(\cdot,t)\ \text{are periodic}.
\end{aligned}\right.
\end{equation}
In this formulation, solutions in the Main Theorem can be viewed as a class of homoclinic orbits which converge to exponentially small solutions in some phase space. The idea of our proof is a combination of partial norm forms and invariant manifold theory. We split the proof into two parts. In the first part, we rescale the spatial, temporal variables and the unknown $u$ in \eqref{eq1.1}, namely,
$x\mapsto\frac {x}{\sqrt{1-\ep^2}}\ ,\ t\mapsto \ep(1-\ep^2)^{\frac 12}t\ ,\ u\mapsto \ep u$, and decompose the unknown $\ep u$ into two parts (hyperbolic and elliptic) according to eigenvalues of the linear operator $\begin{pmatrix}
0 & 1\\
1+(1-\ep^2)\p_{xx} & 0
\end{pmatrix}$. Because of the Hamiltonian nature of our problem, we then perform a sequence of symplectic partial normal form transformations to obtain a system whose oscillatory component is almost invariant up to an error of $O(e^{-\frac c\ep})$. Such procedure yields a Hamiltonian system with two temporal scales of normally elliptic type, i.e., we have a normally elliptic singularly perturbed system. In the second part of the proof, 
we construct local invariant manifolds, namely, center-stable, center-unstable and ect., of sizes $O(1)$ based on our previous work \cite{LZ10}. Since we look for solutions with exponentially small tails, we take $O(e^{-\frac c\ep})$ pieces in center directions of center-stable and center-unstable manifolds and show they intersect by adopting an energy type argument as in \cite{SZ03}, from which we can conclude our Main Theorem.

After this work was completed, the author was informed about \cite{GS01} by Groves-Schneider. The authors show that \eqref{eq1} has small amplitude modulating pulse solutions which can approximated by a pulse-like solution beyond all orders in $\ep$. It is important to note the problem there corresponds to the spatial wave number (in the linear dispersion relation) $k_0\ne0$, which cannot cover our case for $k_0=0$. In the proof, Groves-Schneider use Hamiltonian flows to generate symplectic transformations and construct invariant manifolds with sizes small in $\ep$. Due to some reversibility properties of the system, they are able to prove the intersections of center-stable manifold and center-unstable manifold, which gives solutions they look for. In this paper, we use generating functions introduced in \cite{GL02} to obtain partial normal form transformations and construct $O(1)$ invariant manifolds independent of $\ep$. As we mentioned in last paragraph, we find intersections of center-stable and center-unstable manifolds by an energy type argument in \cite{SZ03}, which needs to be modified accordingly to fit our singularly perturbed system. In \cite{GS05,GS08}, they study similar problems for quasilinear wave equations and obtain $O(\ep^n)$ and $O(e^{-\frac{c}{\sqrt{\ep}}})$ error bounds over spatial length scales of $O(\ep^{-n})$ and $O(e^{\frac{c}{\sqrt{\ep}}})$, respectively. In a more recent preprint \cite{KW12} by Kristiansen-Wulff, the authors study a general dynamical system in some Banach space or a finite dimensional Hamiltonian system with two temporal scales of normally elliptic type. In there, they obtain an $O(e^{-\frac c\ep})$ approximation of the slow manifold. Even though our generating functions are similar to those in \cite{KW12}, we will present all necessary details because of our infinite dimensional Hamiltonian set up. 

The main feature of the problem studied here can thought as a limit system under normally elliptic singular perturbation. Some other existing literature on this topic can be found in \cite{GL02,Ne84,L00,LZ10,Ma01,SZ02} and references therein.

The rest of this paper is organized as follows. In
Section~\ref{P}, we introduce some notations and preliminary
results. In
Section~\ref{Tr}, we define and estimate a sequence of symplectic transformations which transform \eqref{eq1.1} into a
system whose oscillatory components are almost invariant up to an error of $O(e^{-\frac c\ep})$. We construct invariant manifolds for
the transformed system in Section~\ref{IM}. In
Section~\ref{Intersection}, we prove the intersection of invariant
manifolds, which will imply our main theorem. 

\section{Preliminaries}\label{P}
In this section, we introduce some notations and transform \eqref{eq1.1} into a normally elliptic singularly perturbed system which is needed for our future analysis. For $\om>0$, let
\[\begin{aligned}
&L^2(\om)\triangleq\{u=\sum_{n=1}^\infty a_n\sin{(n\om x)}\Big| \sum_{n=1}^{\infty}a_n^2<\infty\},\\
&H^1(\om)\triangleq\{u\in L^2(\om)\Big|\sum_{n=1}^{\infty}(1+n^2\om^2
)a_n^2<\infty\},\end{aligned}\]
i.e., the function spaces are just intersections of $\frac{2\pi}{\om}$-periodic odd functions and sobolev spaces $L^2$ and $H^1$, respectively. For simplicity, when $\om=1$, we write
$L^2=L^2(1)$ and $H^1=H^1(1)$. We assume
\begin{enumerate}
\item[(H)] $f$ is odd and holomorphic in $u$. Moreover,
$f'(0)=0,f'''(0)>0$.
\end{enumerate}
By rescaling $x$ to $\frac x\om$ in \eqref{eq1.1}, we have
\begin{equation}\label{eq2.2}
\left\{\begin{aligned} &u_{t}=v,\\
&v_t=(1+\om^2\p_{xx})u-f(u).
\end{aligned}\right.
\end{equation}
We will study \eqref{eq2.2} as a perturbation problem with small parameter
$\ep=\sqrt{1-\om^2}$. It is standard to show that the linear operator
\[\begin{pmatrix} 0
& 1\\1+\om^2\p_{xx} & 0
\end{pmatrix}\]
has eigenvalues
$\lambda_k^{\pm}=\pm\sqrt{1-k^2\om^2}$ for $k\geq1$. Therefore,
$\lambda_{\pm 1}=\pm\ep$ are real and all others are purely imaginary. Motivated by this fact, we decompose $u$
into hyperbolic part and elliptic part, namely,
\[u(t,x)=u^h(t)\sin{x}+u^c(t,x)\ , \
\int_{-\pi}^{\pi} u^c(t,x)\sin{x}\ dx=0.\]Moreover, we let
\[\tau=\ep(1-\ep^2)^{\frac 12}t\ ,\
 w^h(\tau)=\frac{u^h(\ep^{-1}(1-\ep^2)^{-\frac 12}\tau)}{\ep}\ ,\
w^c(\tau,x)=\frac{u^c(\ep^{-1}(1-\ep^2)^{-\frac 12}\tau,x)}{\ep}.\]
It is easy to verify $w^h$ and $w^c$ satisfy
\begin{equation}\label{eq2.5}
\left\{\begin{aligned}
&(1-\ep^2)w_{\tau\tau}^h-w^h+\frac{1}{\ep^3}P_h f(\ep w^h\sin{x}+\ep
 w^c)=0,\\
&(1-\ep^2)w_{\tau\tau}^c-\frac{1}{\ep^2}\big(1+(1-\ep^2)\p_{xx}\big)w^c+\frac
{1}{\ep^3}P_cf(\ep w^h\sin{x}+\ep  w^c)=0,
\end{aligned}\right.
\end{equation}
where
\[\begin{aligned}
& P_h f(\ep w^h\sin{x}+\ep \tilde w^c)\triangleq \big(\frac
{1}{\pi}\int_{-\pi}^{\pi} f(\ep
w^h\sin{x}+\ep w^c)\sin{x}\ dx\big),\\
& P_c f(\ep w^h\sin{x}+\ep w^c)\triangleq  f(\ep w^h\sin{x}+\ep
 w^c)-\big(P_h f(\ep w^h\sin{x}+\ep w^c)\big)\sin{x}.
\end{aligned}\]
Finally, we set $w_1^h=(1-\ep^2)^{\frac 12}w_\tau^h$,
$w_1^c=\ep(-1-\p_{xx})^{-\frac 12}w_\tau^c$ and plug into
\eqref{eq2.5} to obtain a first order system
\begin{equation}\label{eq2.5.1}
\left\{\begin{aligned} &\begin{pmatrix} w^h\\w_1^h
\end{pmatrix}_{\tau}=\frac {1}{(1-\ep^2)^{\frac 12}}\begin{pmatrix}
0 & 1\\
1 & 0
\end{pmatrix}
\begin{pmatrix}
w^h\\w_1^h
\end{pmatrix}+\begin{pmatrix}
0\\ -\frac{1}{(1-\ep^2)^{\frac 12}\ep^{3}}P_h f(\ep w^h\sin{x}+\ep
w^c)
\end{pmatrix},\\
&\begin{pmatrix} w^c\\ w_1^c
\end{pmatrix}_{\tau}=\frac{1}{\ep}\begin{pmatrix}
0 & (-1-\p_{xx})^{\frac 12}\\
-(-1-\p_{xx})^{\frac 12} & 0
\end{pmatrix}\begin{pmatrix}
w^c\\ w_1^c
\end{pmatrix}\\
&\hspace{1.5cm}+\frac{\ep(-1-\p_{xx})^{-\frac 12}}{1-\ep^2}\begin{pmatrix} 0 \\
w^c-\frac{1}{\ep^3}P_c f(\ep w^h\sin{x}+\ep w^c)
\end{pmatrix},
\end{aligned}\right.
\end{equation}
which is in the singular perturbation form of normally elliptic type
due to fast oscillatory feature of the second equation. The above system is also a Hamiltonian system with Hamiltonian
\begin{equation}\label{eq2.5.1a}
\begin{aligned}&H(w^h,w_1^h,w^c,w_1^c,\ep)\\
=&\pi\Big[\frac {(w_1^h)^2}{2}-\frac{(w_h)^2}{2}+\frac{f^{\prime\prime\prime}(0)}{32}(w^h)^4+\frac{1}{\ep^2\pi}\int_{-\pi}^\pi \frac{\om^2}{2}((-1-\p_{xx})^{\frac 12}w_1^c)^2\\
&\hspace{1cm}+\frac{\om^2}{2}(w_x^c)^2+\frac{F(\ep w^h\sin x+\ep w^c)}{\ep^2}-\ep^2\frac{f^{\prime\prime\prime}(0)}{24}(w^h\sin x)^4-\frac{(w^c)^2}{2}\ dx\Big],
\end{aligned}
\end{equation}
where $F$ is the anti-derivative of $f$ with $F(0)=0$,
so that \eqref{eq2.5.1} can be written as
\begin{equation}\label{eq2.5.3}
\begin{pmatrix}
w^h\\ w_1^h \\ w^c\\ w_1^c
\end{pmatrix}_\tau=\begin{pmatrix}
0 & \frac{1}{\omega\pi} & 0 & 0\\
-\frac{1}{\omega\pi} & 0 & 0 & 0\\
0 & 0 & 0 & \frac{\ep}{\om^2}(-1-\p_{xx})^{-\frac 12}\\
0 & 0 & -\frac{\ep}{\om^2}(-1-\p_{xx})^{-\frac 12} & 0\end{pmatrix}\nabla H.
\end{equation}
To simplify our notation, we write
\begin{equation}\label{eq2.6}
\left.
\begin{aligned}
& X\triangleq \mathbb{R}^2\ , \ Y\triangleq \big(\oplus_{k=2}^{\infty}\{\mathbb{R}^2\sin{kx}\}\big)\cap \big(L^2\times L^2\big),\\
& W^h=\begin{pmatrix} w^h\\w_1^h
\end{pmatrix}\in X\ , \ W^c=\begin{pmatrix}
w^c\\w_1^c
\end{pmatrix}\in Y,\\
&A\triangleq\begin{pmatrix}
0 & 1\\
1 & 0
\end{pmatrix}\ , \ L\triangleq (-1-\p_{xx})^{\frac 12}\ , \ J\triangleq\begin{pmatrix}
0 & L\\
-L& 0
\end{pmatrix},\\
& Y_1\triangleq D(J),\ \ \mbox{the domain of $J$ which is endowed with graph norm},\\
& F(W^h,W^c,\ep)\triangleq(\frac{1}{(1-\ep^2)^{\frac
12}}-1)\begin{pmatrix}
0 & 1\\
1 & 0
\end{pmatrix}W^h\\
&\hspace{2.7cm}+\begin{pmatrix} 0\\ -\frac{1}{(1-\ep^2)^{\frac
12}\ep^{3}}P_h f(\ep w^h\sin{x}+\ep w^c)
\end{pmatrix},\\
&G(W^h,W^c,\ep)\triangleq\frac{(-1-\p_{xx})^{-\frac 12}}{1-\ep^2}\begin{pmatrix} 0 \\
w^c-\frac{1}{\ep^3}P_c f(\ep w^h\sin{x}+\ep w^c)
\end{pmatrix},
\end{aligned}\right.
\end{equation}
where the norm on $Y$ is the standard $L^2$ norm. It is straight
forward to verify that
\begin{equation}\label{eq2.5.2}
J^{\star}=-J\ , \ J^{-1}\in L(Y,Y_1)\ ,\ |J^{-1}|_{L(Y,Y_1)}\leq2,
\end{equation}
where $J^{\star}$ is the adjoint of $J$. 

Using above notations, we
can write \eqref{eq2.5.1} abstractly as
\begin{equation}\label{eq2.6.1}
\left\{
\begin{aligned}
& W_\tau^h=AW^h+F(W^h,W^c,\ep),\\
& W_\tau^c=\frac J\ep W^c+\ep G(W^h,W^c,\ep).
\end{aligned}\right.
\end{equation}

\begin{prop}\label{prop2.1}
Assume (H). For any $0\leq m\in\mathbb{N}$ and $K>0$, there exists
$\ep_0>0$ such that
\begin{equation}\label{eq2.7}
\begin{aligned}
&(D^iF,D^iG)\in C^0(B_K(0,X\times Y_1)\times(-\ep_0,\ep_0),\\
&\hspace{6cm}L(\otimes^i(X\times
Y_1),X\times Y_1)\ ,\ 0\leq i\leq m,\\
&(D^iF,D^iG)\in C^0(B_K(0,X\times Y_1)\times(-\ep_0,\ep_0),\\
&\hspace{4cm}L(\otimes^{i-1}(X\times Y_1)\otimes(X\times Y),X\times Y)\ ,\ 1\leq i\leq m,\\
&(\partial_{\ep}^i F,\partial _\ep^i G)\in C^0(B_K(0,X\times Y_1)\times(-\ep_0,\ep_0),X\times Y_1).\\
\end{aligned}
\end{equation}
Moreover,
\begin{equation}\label{eq2.7.0}
(F,G)(0,0,\ep)=0\ , \ DF(0,0,0)=0\ , \ (D^2F,D^2G)(0,0,\ep)=0.
\end{equation}
Finally, the Cauchy problem of \eqref{eq2.5.1} has a unique mild solution in $X\times
Y_1$.
\end{prop}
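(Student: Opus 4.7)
The strategy is to exploit hypothesis (H) to cancel the apparent $\ep^{-3}$ singularity, reducing all claims to standard facts about Nemytski operators between Sobolev spaces and semigroup theory.

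First, since $f$ is odd and holomorphic with $f'(0)=0$, its Taylor expansion contains only odd powers of degree at least $3$: $f(u)=\tfrac{f'''(0)}{6}u^3+\sum_{k\geq 2}a_{2k+1}u^{2k+1}$. Therefore
\begin{equation*}
\phi(u,\ep):=\ep^{-3}f(\ep u)=\tfrac{f'''(0)}{6}u^3+\sum_{k\geq 2}a_{2k+1}\ep^{2k-2}u^{2k+1}
\end{equation*}
is jointly analytic in $(u,\ep)$ on bounded sets, vanishes to order $3$ in $u$ at $u=0$, and depends (evenly) analytically on $\ep$. This immediately yields the vanishing statements \eqref{eq2.7.0}: the composition $\phi(\ep w^h\sin x+\ep w^c,\ep)$ together with its first two $W$-derivatives at $W=0$ vanish, while the auxiliary linear part of $F$, namely $((1-\ep^2)^{-1/2}-1)AW^h$, vanishes at $\ep=0$.

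Second, for the regularity \eqref{eq2.7}, I use the one-dimensional Sobolev embedding for $2\pi$-periodic functions to get $Y_1\hookrightarrow C^0([-\pi,\pi],\mathbb{R}^2)$. For $(W^h,W^c)\in B_K(0,X\times Y_1)$ and small $\ep$, the argument $\ep w^h\sin x+\ep w^c$ is uniformly bounded pointwise, so composition with the entire function $\phi(\cdot,\ep)$ defines a smooth Nemytski map into $H^1$ (and into $L^2$) with derivatives bounded on bounded sets. The projections $P_h,P_c$ are orthogonal in $L^2$ and commute with $\p_{xx}$, so they act boundedly on each Sobolev class, and $(-1-\p_{xx})^{-1/2}$ acts on $\sin(kx)$, $k\geq 2$, by the bounded multiplier $(k^2-1)^{-1/2}$, hence extends to a bounded map $Y\to Y_1$. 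Composing these ingredients proves the first line of \eqref{eq2.7}; the second line follows because, when the last variation lies in the weaker space $X\times Y$, the chain rule for the Nemytski composition pairs an $L^\infty$-bounded coefficient (coming from the $Y_1$-inputs via Sobolev embedding) with an $L^2$-variation, landing naturally in $X\times Y$. The $\ep$-derivatives are handled identically, since $\phi$ is jointly analytic.

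Third, for the Cauchy problem, $A$ is bounded on the finite-dimensional space $X$, and by \eqref{eq2.5.2}, $J$ is skew-adjoint on $Y$ with domain $Y_1$, so $e^{\tau J/\ep}$ is a unitary $C_0$-group on $Y$ that leaves $Y_1$ invariant. Consequently the linear part of \eqref{eq2.6.1} generates a strongly continuous group on $X\times Y_1$. Combined with the local Lipschitz property of $(F,G):X\times Y_1\to X\times Y_1$ from the previous step, a standard Duhamel fixed-point argument in $C([0,T],X\times Y_1)$ on a sufficiently small interval produces a unique local mild solution. The main obstacle is the bookkeeping for the second line of \eqref{eq2.7}: one must check carefully that every derivative of the Nemytski composition that falls on a $Y$-valued (rather than $Y_1$-valued) variation absorbs the resulting regularity loss via the $L^\infty$-bound from $Y_1\hookrightarrow L^\infty$, instead of requiring the variation to live in $H^1$.
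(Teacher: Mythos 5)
Your proposal is correct and follows the same route as the paper's proof, which is essentially a two-line assertion ("smoothness follows from regularity and analyticity of $f$; well-posedness from standard semigroup theory and Duhamel"). You have simply filled in the details the paper leaves implicit: the cancellation of the $\ep^{-3}$ singularity via $f$ being odd with $f'(0)=0$, the Sobolev embedding $Y_1=D(J)=H^1\times H^1\hookrightarrow C^0$ making the Nemytski composition smooth, the boundedness of the multiplier $(k^2-1)^{-1/2}$ giving $(-1-\p_{xx})^{-1/2}\in L(Y,Y_1)$, and the skew-adjointness of $J$ yielding the unitary group used in the Duhamel fixed point.
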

\begin{proof}
The smoothness of $F$ and $G$ is a direct consequence of the regularity of $W^h$ and $W^c$ and the analyticity of $f$. The verification of \eqref{eq2.7.0} is straightforward. Finally, the
well-posedness of \eqref{eq2.6} is given by the standard semigroup
theory and Duhamel's principle, see \cite{Pa83}.
\end{proof}

The formal singular limit of \eqref{eq2.5.1}, which can rigorously justified by Proposition \ref{prop2.1} and Theorem 2.2 in
\cite{LZ10}, is given by the
following Duffing equation
\begin{equation}\label{eq2.9}
\begin{pmatrix}
w_0^h\\w_{10}^h
\end{pmatrix}_\tau=\begin{pmatrix}
0 & 1\\
1& 0
\end{pmatrix}\begin{pmatrix}
w_0^h\\w_{10}^h
\end{pmatrix}-\begin{pmatrix}
0 \\ \frac {1}{8} f'''(0)(w_0^h)^3
\end{pmatrix},
\end{equation}
which has a homoclinic orbit
\begin{equation}\label{eq2.10}
h(\tau)=\big(\frac{4}{\sqrt{
f'''(0)}\cosh{\tau}},-\frac{4\tanh{\tau}}{\sqrt{
f'''(0)}\cosh{\tau}}\big).
\end{equation}

\section{Partial Normal Form Transformations}\label{Tr}
The plan of this section is to construct partial normal form transformations for \eqref{eq2.6.1}. We split the whole section into two parts. In the first part, we formally construct a sequence of symplectic transformations close to identity, namely,
\begin{equation}\label{eq3.1}
\begin{aligned}
Z_r=(Z_r^h,Z_r^c)&=(z_r^h,z_{1,r}^h,z_r^c,z_{1,r}^c)\\
&=(I+\Gamma_r(\cdot,\cdot,\ep))\circ\cdots\circ(I+\Gamma_2(\cdot,\cdot,\ep))(W^h,W^c),
\end{aligned}
\end{equation}
so that the equation in the normal direction can be written as
\begin{equation}\label{eq3.2}
\p_\tau Z_r^c=\frac J\ep Z_r^c+\ep G_r(Z_r,\ep)Z_r^c+e^{-r}\tilde
G_r(Z_r,\ep).
\end{equation}
Thus, the center space $\{Z_r^c=0\}$ is almost
invariant up to an error of $O(e^{-r})$. In the second part, we show that $r$ can be taken as $O(\frac 1\ep)$ and provide necessary estimates on those partial normal form transformations and terms appearing in the transformed system.\\
\\
\noindent{{\bf Partial Normal Forms.}} In view of \eqref{eq2.5.1}, one can see $W^h$ corresponds to eigenvalues of $O(1)$ and $W^c$ corresponds to eigenvalues of $O(\frac 1\ep)$. Thus, we expect terms contained in $G(W^h,0,\ep)$ can be removed by some partial norm forms. Let $(G^c,G_1^c)=G$ and recall that $L=(-1-\p_{xx})^{-\frac 12}$. We define the following generating function
\[\begin{aligned}
T_2(w^h,z_{1,2}^h,w^c,z_{1,2}^c)=&<w^c,z_{1,2}^c>-<\ep^2 L^{-1}G_1^c(w^h,z_{1,2}^h,0,\ep),z_{1,2}^c>\\
&+\frac 1\ep <w^h,z_{1,2}^h>-<\ep^2 L^{-1}G^c(w^h,z_{1,2}^h,0,\ep),w^c>,
\end{aligned}\]
where $<\cdot,\cdot>$ represents inner product on $\R^2$ or $H^1$. It follows
\begin{equation}\label{eq3.2.1}
\left\{
\begin{aligned}
z_2^h=&w^h-\ep^3\big(<D_2L^{-1}G_1^c(w^h,z_{1,2}^h,0,\ep),z_{1,2}^c>\\
&\hspace{4.5cm}+< D_2L^{-1}G^c(w^h,z_{1,2}^h,0,\ep),w^c>\big),\\
z_{1,2}^h=&w_1^h+\ep^3\big(<D_1L^{-1}G_1^c(w^h,z_{1,2}^h,0,\ep),z_{1,2}^c>\\
&\hspace{4.5cm}+< D_1L^{-1}G^c(w^h,z_{1,2}^h,0,\ep),w^c>\big),\\
z_2^c=&w^c-\ep^2 L^{-1}G_1^c(w^h,z_{1,2}^h,0,\ep)\ , \ 
z_{1,2}^c=w_1^c+\ep^2 L^{-1} G^c(w^h,z_{1,2}^h,0,\ep).
\end{aligned}\right.
\end{equation}
By the implicit function theorem, there exist positive constants $\delta_2,\ep_2$ and the map $(\Gamma_2^h,\Gamma_2^c)=(\gamma_2^h,\gamma_{1,2}^h,\gamma_2^c,\gamma_{1,2}^c):B_{\delta_2}(0,X\times Y_1)\times[0,\ep_2]\to X\times Y_1$ such that
\begin{equation}\label{eq3.2.2}
\begin{pmatrix}
z_2^h\\z_{1,2}^h\\z_2^c\\z_{1,2}^c
\end{pmatrix}=\begin{pmatrix}
w^h\\ w_1^h\\ w^c\\w_1^c\end{pmatrix}+\ep^2\begin{pmatrix}
\ep\gamma_2^h \\ \ep\gamma_{1,2}^h \\ \gamma_2^c \\ \gamma_{1,2}^c
\end{pmatrix}(W^h,W^c,\ep).
\end{equation}
One can also invert the above transformation to obtain
\begin{equation}\label{eq3.2.3}
\begin{pmatrix}
w^h\\ w_1^h\\ w^c\\w_1^c\end{pmatrix}=\begin{pmatrix}
z_2^h\\z_{1,2}^h\\z_2^c\\z_{1,2}^c
\end{pmatrix}+\ep^2\begin{pmatrix}
\ep\bar\gamma_2^h \\ \ep\bar\gamma_{1,2}^h \\ \bar\gamma_2^c \\ \bar\gamma_{1,2}^c
\end{pmatrix}(Z_2^h,Z_2^c,\ep).
\end{equation}
Plugging \eqref{eq3.2.1}--\eqref{eq3.2.3} into \eqref{eq2.6.1}, we have
\[\begin{aligned}
\p_\tau Z_2^h=&W_\tau^h+\ep^3\Big(\Gamma_2^h(W^h,W^c,\ep)\Big)_\tau\\
=&(1+\ep^3 D_{W^h}\Gamma_2^h)(AW^h+F(W^h,W^c,\ep))+\ep^3D_{W^c}\Gamma_2^h(\frac J\ep W^c+\ep G(W^h,W^c,\ep))\\
=&AZ_2^h+F(Z_2^h,Z_2^c,\ep)+\ep^2\tilde F(Z_2^h,Z_2^c,\ep),\end{aligned}\]
and
\[\begin{aligned}
\p_\tau Z_2^c=&\Big(W^c+\ep^2\begin{pmatrix}
-L^{-1}G_1^c(w^h,z_{1,2}^h,0,\ep)\\ L^{-1}G^c(w^h,z_{1,2}^h,0,\ep)
\end{pmatrix}\Big)_\tau\\
=&\frac J\ep Z_2^c+\ep \Big(G(w^h,w_1^h,W^c,\ep)-G(w^h,z_{1,2}^h,0,\ep)\Big)+\ep^2\begin{pmatrix}
-L^{-1}\p_\tau G_1^c(w^h,z_{1,2}^h,0,\ep)\\ L^{-1}\p_\tau G^c(w^h,z_{1,2}^h,0,\ep)\end{pmatrix}\\
=&\frac J\ep Z_2^c+\ep\big(\int_0^1 D_{W^c}G(z_2^h+\ep^3\bar\gamma_2^h,z_{1,2}^h+p\ep^3\bar\gamma_{1,2}^h,
p(Z_2^c+\ep^2(\bar\gamma_2^c,\bar\gamma_{1,2}^c)),\ep)dp\big)Z_2^c\\
&+(\int_0^1 D_{w_1^h}G\ dp)\ep^4\bar\gamma_{1,2}^h+(\int_0^1 D_{W^c}G\ dp)\ep^3(\bar\gamma_2^c,\bar\gamma_{1,2}^c)+\ep^2\begin{pmatrix}
-L^{-1}\p_\tau G_1^c(w^h,z_{1,2}^h,0,\ep)\\ L^{-1}\p_\tau G^c(w^h,z_{1,2}^h,0,\ep)\end{pmatrix}\\
\triangleq&\frac J\ep Z_2^c+\ep G_2(Z_2,\ep)Z_2^c+\ep^2\tilde G_2(Z_2,\ep).
\end{aligned}\]
In summary, we have constructed a symplectic transformation given by $Z_2^h=W^h+\ep^3\Gamma_2^h(W^h,W^c,\ep),Z_2^c=W^c+\ep^2\Gamma_2^c(W^h,W^c,\ep)$ such that \eqref{eq2.6} is transform into
\begin{equation}\label{eq3.2.4}
\left\{
\begin{aligned}
& \p_\tau Z_2^h=AZ_2^h+F_2(Z_2^h,Z_2^c,\ep),\\
& \p_\tau Z_2^c=\frac J\ep Z_2^c+\ep G_2(Z_2,\ep)Z_2+\ep^2\tilde G(Z_2,\ep),
\end{aligned}\right.
\end{equation}
where
\begin{equation}\label{eq3.2.4a}
\begin{aligned}
&F_2(Z_2^h,Z_2^c,\ep)=F(Z_2^h,Z_2^c,\ep)+\ep^2\tilde F(Z_2^h,Z_2^c,\ep),\\
&G_2(Z_2,\ep)=\int_0^1 D_{W^c}G(z_2^h+\ep^3\bar\gamma_2^h,z_{1,2}^h+p\ep^3\bar\gamma_{1,2}^h,
p(Z_2^c+\ep^2(\bar\gamma_2^c,\bar\gamma_{1,2}^c)),\ep)dp,\\
&\tilde G_2(Z_2,\ep)=(\int_0^1 D_{w_1^h}G\ dp)\ep^4\bar\gamma_{1,2}^h+(\int_0^1 D_{W^c}G\ dp)\ep^3(\bar\gamma_2^c,\bar\gamma_{1,2}^c)\\
&\hspace{5.5cm}+\ep^2\begin{pmatrix}
-L^{-1}\p_\tau G_1^c(z_2^h+\ep^3\gamma_2^h,z_{1,2}^h,0,\ep)\\ L^{-1}\p_\tau G^c(z_2^h+\ep^3\gamma_2^h,z_{1,2}^h,0,\ep)\end{pmatrix}.
\end{aligned}
\end{equation}

After $k$ steps, the system is in the form
\begin{equation}\label{eq3.2.5}
\left\{
\begin{aligned}
& \p_\tau Z_k^h=AZ_k^h+F_k(Z_k,\ep),\\
& \p_\tau Z_k^c=\frac J\ep Z_k^c+\ep G_k(Z_k,\ep)Z_k^c+\ep^k\tilde G_k(Z_k,\ep).
\end{aligned}\right.
\end{equation}
In the next step, we define the generating function
\[\begin{aligned}
T_{k+1}(z_k^h,z_{1,k+1}^h,z_k^c,z_{1,k+1}^c)=&<z_k^c,z_{1,k+1}^c>-<\ep^{k+1} L^{-1}\tilde G_{1,k}^c(z_k^h,z_{1,k+1}^h,0,\ep),z_{1,k+1}^c>\\
&+\frac 1\ep <z_k^h,z_{1,k+1}^h>-<\ep^{k+1} L^{-1}\tilde G_k^c(z_k^h,z_{1,K+1}^h,0,\ep),z_k^c>,
\end{aligned}\]
where $(\tilde G_k^c,\tilde G_{1,k}^c)=\tilde G_k$. It implies
\begin{equation}\label{eq3.2.6}
\left\{
\begin{aligned}
z_{k+1}^h=&z_k^h-\ep^{k+2}\big(<D_2L^{-1}\tilde G_{1,k}^c(z_k^h,z_{1,k+1}^h,0,\ep),z_{1,k+1}^c>\\
&\hspace{4.5cm}+< D_2L^{-1}\tilde G_k^c(z_k^h,z_{1,k+1}^h,0,\ep),z_k^c>\big),\\
z_{1,k+1}^h=&z_{1,k}^h+\ep^{k+2}\big(<D_1L^{-1}\tilde G_{1,k}^c(z_k^h,z_{1,k+1}^h,0,\ep),z_{1,k+1}^c>\\
&\hspace{4.5cm}+< D_1L^{-1}\tilde G_k^c(z_k^h,z_{1,k+1}^h,0,\ep),z_k^c>\big),\\
z_{k+1}^c=&z_k^c-\ep^{k+1} L^{-1}\tilde G_{1,k}^c(z_k^h,z_{1,k+1}^h,0,\ep),\\
z_{1,k+1}^c=&z_{1,k}^c+\ep^{k+1} L^{-1} \tilde G_k^c(z_k^h,z_{1,k+1}^h,0,\ep).
\end{aligned}\right.
\end{equation}
There exist positive constants $\delta_{k+1},\ep_{k+1}$ and the map \[(\Gamma_{k+1}^h,\Gamma_{k+1}^c)=(\gamma_{k+1}^h,\gamma_{1,k+1}^h,\gamma_{k+1}^c,\gamma_{1,k+1}^c):B_{\delta_{k+1}}(0,X\times Y_1)\times[0,\ep_{k+1}]\to X\times Y_1\] such that
\begin{equation}\label{eq3.2.7}
\begin{pmatrix}
z_{k+1}^h\\z_{1,k+1}^h\\z_{k+1}^c\\z_{1,k+1}^c
\end{pmatrix}=\begin{pmatrix}
z_k^h\\ z_{1,k}^h\\ z_k^c\\z_{1,k}^c\end{pmatrix}+\ep^{k+1}\begin{pmatrix}
\ep\gamma_{k+1}^h \\ \ep\gamma_{1,k+1}^h \\ \gamma_{k+1}^c \\ \gamma_{1,k+1}^c
\end{pmatrix}(Z_k^h,Z_k^c,\ep).
\end{equation}
By inverting the above transformation, we obtain
\begin{equation}\label{eq3.2.8}
\begin{pmatrix}
z_k^h\\ z_{1,k}^h\\ z_k^c\\z_{1,k}^c\end{pmatrix}=\begin{pmatrix}
z_{k+1}^h\\z_{1,k+1}^h\\z_{k+1}^c\\z_{1,k+1}^c
\end{pmatrix}+\ep^{k+1}\begin{pmatrix}
\ep\bar\gamma_{k+1}^h \\ \ep\bar\gamma_{1,k+1}^h \\ \bar\gamma_{k+1}^c \\ \bar\gamma_{1,k+1}^c
\end{pmatrix}(Z_{k+1}^h,Z_{k+1}^c,\ep).
\end{equation}
Finally, we plug \eqref{eq3.2.6}--\eqref{eq3.2.8} into \eqref{eq3.2.5} to obtain
\[\begin{aligned}
\p_\tau Z_{k+1}^h=&\p_\tau Z_k^h+\ep^{k+2}\Big(\Gamma_{k+1}^h(Z_k,\ep)\Big)_\tau\\
=&(1+\ep^{k+2} D_{Z_k^h}\Gamma_2^h)(AZ_k^h+F_k(Z_k,\ep))+\ep^{k+2}D_{Z_k^c}\Gamma_{k+1}^h(\frac J\ep Z_k^c+\ep G_k(Z_k,\ep)+\ep^k\tilde G_k(Z_k,\ep))\\
=&AZ_k^h+F_k(Z_{k+1},\ep)+\ep^{k+1}\tilde F(Z_{k+1},\ep)\triangleq AZ_{k+1}^h+F_{k+1}(Z_{k+1},\ep),\end{aligned}\]
and
\begin{equation}\label{eq3.2.9}
\begin{aligned}
&\p_\tau Z_{k+1}^c\\=&\Big(Z_k^c+\ep^{k+1}\begin{pmatrix}
-L^{-1}\tilde G_{1,k}^c(z_k^h,z_{1,k+1}^h,0,\ep)\\ L^{-1}\tilde G_k^c(z_k^h,z_{1,k+1}^h,0,\ep)
\end{pmatrix}\Big)_\tau\\
=&\frac J\ep Z_k^c+\ep G_k(Z_k,\ep)Z^c_k+\ep^k\tilde G_k(Z_k,\ep) +\ep^{k+1}\begin{pmatrix}
-L^{-1}\p_\tau\tilde G_{1,k}^c(z_k^h,z_{1,k+1}^h,0,\ep)\\ L^{-1}\p_\tau\tilde G_k^c(z_k^h,z_{1,k+1}^h,0,\ep)
\end{pmatrix}\\
=&\frac J\ep Z_{k+1}^c+\ep G_k(Z_{k+1}+O(\ep^{k+1}),\ep)(Z_{k+1}^c+O(\ep^{k+1}))\\
&+\ep^k\Big[\tilde G_k(z_{k+1}^h+O(\ep^{k+2}),z_{1,k+1}^h+O(\ep^{k+2}),Z_{k+1}^c+O(\ep^{k+1}),\ep)\\
&-\tilde G_k(z_{k+1}^h+O(\ep^{k+2}),z_{1,k+1}^h,0,\ep)\Big]+\ep^{k+1}\begin{pmatrix}
-L^{-1}\p_\tau\tilde G_{1,k}^c(z_k^h,z_{1,k+1}^h,0,\ep)\\ L^{-1}\p_\tau\tilde G_k^c(z_k^h,z_{1,k+1}^h,0,\ep)
\end{pmatrix}\\
=&\frac J\ep Z_{k+1}^c+\Big(\ep G_k(Z_{k+1},\ep)+\ep^k\int_0^1 D_{Z_k^c}\tilde G_k(Z_{k+1},p,\ep)\ dp\Big)Z_{k+1}^c+O(\ep^{k+1})\\
\triangleq&\frac J\ep Z_{k+1}^c+\ep G_{k+1}(Z_{k+1},\ep)Z_{k+1}^c+\ep^{k+1}\tilde G_{k+1}(Z_{k+1},\ep).
\end{aligned}\end{equation}

In the second part of this section, we use Cauchy integrals to control bounds of $(\Gamma_i^h,\Gamma_i^c)$ as well as terms $F_i,G_i$ and $\tilde G_i$ appearing in the transformed systems. We will go through this procedure inductively by shrinking the spatial domain at each step. As a technical point, since each symplectic transformation depends on $\ep$, it is necessary to keep a uniform domain for $\ep$, i.e., there exists $\ep_0>0$ such that the above functions are well defined on $[0,\ep_0]$ for all $i$. This can be achieved by having a uniform bound on $\tilde G_i$, which will be proved in the following.\\

\noindent{{\bf Estimates on Partial Normal Forms and the Transformed System \eqref{eq3.2.5}}.}The estimates are obtained by taking advantage of the
analyticity of $f$. First, we complexify the domain of $F,G$. Let
\[X^{\mathbb{C}}\triangleq X\oplus iX\ , \
Y_1^{\mathbb{C}}\triangleq Y_1\oplus iY_1,\]where the norm is given
by the sum of the real and imaginary part. With slight abuse of
notation, we still use $X$ and $Y_1$ to denote $X^{\mathbb{C}}$ and
$Y_1^{\mathbb{C}}$. We introduce the following notations for our
next lemma. Let
\begin{equation}\label{eq3.6}
P(s)=G(W^h+s\delta W^h,W^c,\ep)\ , \ Q(s)=G(W^h,W^c+s\delta W^c,\ep),
\end{equation}
where $s\in\mathbb{C}$ and $|\delta W^h|_X=|\delta W^c|_{Y_1}=1$.
Given any $K>0,K_1>0$ and $m\geq2$, we denote
\[
\Omega_m=B_{2K-(m-1)K_1\ep} (0,X\times Y_1),
\]which is the ball in the function space $X\times Y_1$ centered at the origin with radius $2K-(m-1)K_1\ep$. For sufficiently small $c>0$ and $m\leq [\frac c\ep]$, we
have
\begin{equation}\label{eq3.6.1}
2K-(m-1)K_1\ep>K\ ,\ \emptyset\neq\Omega_{m+1}\subset\Omega_m,
\end{equation}
where $[\cdot]$ denotes the largest integer that is less than or
equal to the number in the bracket.
\begin{lemma}\label{le3.1}
Assume (H). Let $K=|h|_{C_\tau^0}+1$, where $h$ is given
in \eqref{eq2.10}. There exists $\ep_0>0,c>0$ such that for each
$\ep\in[0,\ep_0)$ and $2\leq m\leq [\frac c\ep]$,
\begin{equation}\label{eq3.7}
|\ep G_m|_{C^0(\Omega_{m},L(Y_1,Y_1))}\leq 1-\frac{1}{2^m}\ , \
|\ep^m\tilde G_m|_{C^2(\Omega_{m})}\leq \frac{\ep}{2^m}.
\end{equation}
\end{lemma}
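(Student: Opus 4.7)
The plan is to prove the two estimates simultaneously by induction on $m$, exploiting that $f$, and hence all objects $F_j, G_j, \tilde G_j, \Gamma_j$ constructed recursively, extend holomorphically to complex neighborhoods of $\Omega_m$. The single analytic device needed is the Cauchy integral inequality on nested complex balls with gap $K_1\ep$: for any bounded holomorphic $h$,
\[
|D^j h|_{C^0(\Omega_{m+1})}\leq \frac{j!}{(K_1\ep)^j}|h|_{C^0(\Omega_m)},\qquad j=0,1,2,\ldots.
\]
The parameter $K_1$ is fixed once at the outset, large enough to absorb all universal constants encountered below; all remaining manipulations are algebraic bookkeeping of powers of $\ep$.

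The base case $m=2$ follows from \eqref{eq3.2.4a} and Proposition~\ref{prop2.1}: $G_2$ and $\tilde G_2$ are manifestly smooth and uniformly bounded on $\Omega_2$, so shrinking $\ep_0$ enforces $|\ep G_2|_{C^0}\leq 3/4$ and $|\ep^2 \tilde G_2|_{C^2}\leq \ep/4$.

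For the inductive step, assume the estimates at level $k$. The implicit function theorem applied to \eqref{eq3.2.6}, together with the inductive bound on $\tilde G_k$, produces $\Gamma_{k+1}$ of size $O(\ep/(2^k\ep^k))$ on $\Omega_{k+1}$, so that the transformation $W\mapsto Z_{k+1}$ maps $\Omega_{k+1}$ into $\Omega_k$. To control $\ep G_{k+1}$, read off the last line of \eqref{eq3.2.9},
\[
G_{k+1}=G_k+\ep^{k-1}\int_0^1 D_{Z_k^c}\tilde G_k(Z_{k+1},p,\ep)\,dp,
\]
and apply Cauchy on the $K_1\ep$-gap to obtain $\ep^k|D\tilde G_k|_{C^0(\Omega_{k+1})}\leq 1/(K_1 2^k)$; hence $|\ep G_{k+1}|_{C^0}\leq 1-1/2^k+1/(K_1 2^k)\leq 1-1/2^{k+1}$ once $K_1\geq 2$.

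For $\ep^{k+1}\tilde G_{k+1}$, the derivation leading to \eqref{eq3.2.9} decomposes it into the chain-rule residual $\ep^{k+1}L^{-1}\p_\tau\tilde G_k(\cdot,0)$, which is $\ep^{k+1}$ multiplied by $D\tilde G_k$ contracted with the $O(1)$ vector field $AZ_k^h+F_k$, plus Taylor remainders of order $\ep^{k+2}$ arising from the identifications $Z_k\leftrightarrow Z_{k+1}$. Applying Cauchy once for $D\tilde G_k$ and twice more to pass from $C^0$ to $C^2$ (by subdividing the gap into a sub-gap of width $K_1\ep/4$) yields $|\ep^{k+1}\tilde G_{k+1}|_{C^2(\Omega_{k+1})}\leq C\ep/(K_1 2^k)\leq \ep/2^{k+1}$ for $K_1$ large. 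The range restriction $m\leq[c/\ep]$ with $c=K/K_1$ precisely ensures the nested balls remain nonempty, as in \eqref{eq3.6.1}. The main obstacle is that propagating the $C^2$ norm requires controlling ever-higher derivatives of $\tilde G_k$, and a careless accounting loses factors of $\ep^{-j}$; the resolution is that the shrinkage $K_1\ep$ absorbs the derivative cost at the $\ep$-independent rate $1/K_1$, so once $K_1$ is chosen large after the combinatorial constants are fixed, the per-step factor $1/2$ in the geometric bound $1/2^m$ is comfortably achieved.
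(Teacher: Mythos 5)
Your overall approach --- complexification, Cauchy estimates on a nested family of balls losing a gap of width proportional to $K_1\ep$ at each step, and induction in $m$ with $K_1$ chosen large to absorb constants --- is the same as the paper's, and the bookkeeping for $\ep G_{k+1}$ is essentially right (your constant $1/(K_1 2^k)$ should be $2/(K_1 2^k)$, but this is absorbed by $K_1$). There is one minor slip in the base case: the quantity $|\ep G_2|_{C^0}$ is bounded by $2C_1/K_1$, which is independent of $\ep$, so ``shrinking $\ep_0$'' does nothing for it; it is smallness of $1/K_1$ that makes the base case work.

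The genuine gap is your treatment of the $C^2$ estimate. You state the Cauchy inequality correctly at the outset, $|D^j h|_{C^0(\Omega_{m+1})}\leq j!(K_1\ep)^{-j}|h|_{C^0(\Omega_m)}$, but in the last paragraph you assert that the shrinkage ``absorbs the derivative cost at the $\ep$-independent rate $1/K_1$.'' That is not what the inequality says: each derivative costs $1/(K_1\ep)$. When you pass from a $C^0$ bound on $\ep^{k+1}\tilde G_{k+1}$ of size $\ep\cdot C/(K_1 2^k)$ to a $C^2$ bound by two further applications of Cauchy with sub-gaps $\sim K_1\ep$, you pick up a factor of order $(K_1\ep)^{-2}$; the resulting bound is $O\big(1/(K_1^3\ep\, 2^k)\big)$, which blows up as $\ep\to0$ and certainly does not fall below $\ep/2^{k+1}$. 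There is also a structural obstruction: since $\tilde G_{k+1}$ is built from $\partial_\tau\tilde G_k$ and $D_{Z_k^c}\tilde G_k$, controlling $D^2\tilde G_{k+1}$ requires controlling $D^3\tilde G_k$, so a $C^2$ induction cannot close in the form you propose. The paper's own proof avoids this entirely: the inductive hypothesis and the inductive step (cf.\ \eqref{eq3.8} and the last display of the proof) are stated and proved in $C^0$ only. The $C^2$ control, with a different constant $C$ and only at the final index $[\frac c\ep]$, is obtained after the lemma by shrinking once more from $\Omega_{[c/\ep]}$ down to the fixed ball $B_K$, where the gap is $O(1)$ rather than $O(\ep)$, so the derivative loss is $\ep$-independent. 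If you want to match the statement literally, you should recognize that the $C^2$ in \eqref{eq3.7} is not propagated inductively, and argue instead via this $O(1)$ shrinkage at the end.
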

\begin{proof}
We will prove \eqref{eq3.7} inductively. Let
\[C_1=(|F|_X+|G|_{Y_1})_{C^0(B_{2K}(0,X\times Y_1))}.\] 
When $m=2$, since $G$ is analytic, $P(s)$ is an analytic function of $s$ for $|s|\leq\frac{K_1\ep}{2}$ and $|W^h|_X+|W^c|_{Y_1}\leq 2K-\frac{K_1\ep}{2}$. The Cauchy integral gives
\[\begin{aligned}
\p_sP(0)&=D_{W^h} G(W^h,W^c,\ep)(\delta W^h)\\
&=\Big(\frac{1}{2\pi
i}\oint_{\p
B_{\frac{K_1\ep}{2}}(0,\mathbb{C})}\frac{G(W^h+s\delta W^h,W^c,\ep)}{s^2}\ ds\Big)(\delta W^h),\\
\p_sQ(0)&=D_{W^c} G(W^h,W^c,\ep)(\delta W^c)\\
&=\Big(\frac{1}{2\pi
i}\oint_{\p
B_{\frac{K_1\ep}{2}}(0,\mathbb{C})}\frac{G(W^h,W^c+s\delta W^c,\ep)}{s^2}\ ds\Big)(\delta W^c),\end{aligned}\]
which implies 
\begin{equation}\label{eq3.7.1}
|DG(\cdot,\cdot,\ep)|_{C^0(B_{2K-\frac{K_1\ep}{2}}(0,X\times Y_1)),L(X\times Y_1, Y_1)}\leq\frac{2C_1}{K_1\ep}.
\end{equation}
By choosing $\ep_0$ sufficiently small, for $\ep\in[0,\ep_0]$,
\[2K-\frac{K_1\ep}{2}-\ep^3\frac{2C_1}{K_1\ep}2K>2K-K_1\ep.\]
Consequently, from \eqref{eq3.2.1}, we conclude that there exists a unique analytic function $\Gamma_2$ of $O(\ep^2)$ such that
$I+\Gamma_2(\cdot,\cdot,\ep)$ is a diffeomorphism between $B_{2K-\frac{K_1\ep}{2}}(0,X\times Y_1)$ and a region that contains at least $\Omega_2$. According to \eqref{eq3.7.1} and the definition of $G_2$ and $\tilde G_2$ in \eqref{eq3.2.4a}, for sufficiently large $K_1$, we have
\[\begin{aligned}
&|\ep G_2|_{C^0(\Omega_{2},L(Y_1,Y_1))}\leq\frac{2C_1}{K_1}<1-\frac{1}{4},\\
&|\ep^2\tilde G_2|_{C^0(\Omega_2)}\leq O(\ep^2)+\ep^2|J^{-1}|_{L(Y,Y_1)}\frac{2C_1}{K_1\ep}(2K+C_1+O(\ep))\leq\ep\frac{8C_1(2K+C_1)}{K_1}<\frac\ep4,
\end{aligned}\]
which completes the proof for $m=2$.
\begin{rem}
In the estimation of $\ep^2\tilde G_2$, we use the fact that $DG$ is also bounded by $\frac{2C_1}{K_1\ep}$ when it is considered as a mapping from $X\times Y$ to $ Y$, which can be obtained by the same proof of \eqref{eq3.7.1}.
\end{rem}

Suppose for $m=k$, we have
\begin{equation}\label{eq3.8}
|\ep G_{k}|_{C^0(\Omega_{k},L(Y_1,Y_1))}\leq
1-\frac{1}{2^{k}}\ , \ |\ep^{k}\tilde
G_{k}|_{C^0(\Omega_{k},Y_1)}\leq \frac{\ep}{2^{k}}.
\end{equation}
Again, by using the Cauchy integral, we have
\[
|D\tilde G_{k}|_{C^0(B_{2K-kK_1\ep-\frac{K_1\ep}{2}},L(X\times
Y_1,Y_1))}\leq\frac{2|\tilde G_k|_{C^0(\Omega_k,Y_1)}}{K_1\ep}.
\]
It follows from \eqref{eq3.2.6} that there exists a unique analytic function $\Gamma_{k+1}$ of $O(\frac{\ep^2}{2^k})$ such that
$I+\Gamma_{k+1}(\cdot,\cdot,\ep)$ is a diffeomorphism between $B_{2K-kK_1\ep-\frac{K_1\ep}{2}}(0,X\times Y_1)$ and a region that contains at least $\Omega_{k+1}$. According to the definition of $G_{k+1}$ and $\tilde G_{k+1}$ in \eqref{eq3.2.9}, for sufficiently large $K_1$, we have
\[|\ep G_{k+1}|_{C^0(\Omega_{k+1},L(Y_1,Y_1))}\leq 1-\frac{1}{2^k}+\frac{\ep}{2^k}\frac{2}{K_1\ep}<1-\frac{1}{2^{k+1}},\]
and
\[\begin{aligned}
|\ep^{k+1}\tilde G_{k+1}|_{C^0(\Omega_{k+1})}&\leq \big(|\ep G_k|_{C^0}+2\ep^k|D\tilde G_k|_{C^0}\big)\frac{\ep^2}{2^k}+\ep^{k+1}|D\tilde G_k||J^{-1}|(2K+C_1+O(\ep)),\\
&\leq(1-\frac{1}{2^k}+\frac{2\ep}{2^k})\frac{\ep^2}{2^k}+\frac{\ep}{2^k}\frac{8C_1(2K+C_1)}{K_1}\leq\frac{\ep}{2^{k+1}}.
\end{aligned}\]
The proof is completed.
\end{proof}
\begin{rem}
In fact, one can show $|\ep G_m|\leq\delta(1-\frac{1}{2^m})$ for any small $\delta>0$ by taking $K_1$ sufficiently large.
\end{rem}
From \eqref{eq3.6.1}, one can see we can perform the partial normal form transformations $[\frac c\ep]$ times, where $c$ is some small positive number. In view of \eqref{eq3.1}, we have
\[\Big|(I+\Gamma_{[\frac c\ep]}(\cdot,\cdot,\ep))\circ\cdots\circ(I+\Gamma_2(\cdot,\cdot,\ep))\Big|\leq 1+\sum_{k=2}^{[\frac
c\ep]}\frac{\ep^2C}{2^k}\leq 1+\ep^2C,\]
where $C$ is independent of $c$ and $\ep$. Thus, the transformation in \eqref{eq3.1} is well defined. 

It is clear that $F_{[\frac c\ep]},G_{[\frac c\ep]}$ and $\tilde G_{[\frac c\ep]}$ are still analytic functions in terms of $Z_{[\frac c\ep]}$. By shrinking the domain to $B_K(0,X\times Y_1)$, we conclude that there exists $C$ such that
\[\begin{aligned}
&|F_{[\frac c\ep]}|_{C^2(B_K(0,X\times Y_1),X)}\leq C,\\
&|\ep G_{[\frac c\ep]}|_{C^2(B_K(0,X\times Y_1),L(Y_1,Y_1))}\leq C,\\
& |\ep^{[\frac c\ep]}\tilde
G_{[\frac c\ep]}|_{C^2(B_K(0,X\times Y_1),Y_1)}\leq C\ep e^{-\frac{c\log{\sqrt{2}}}{\ep}}.
\end{aligned}\]

With slight abuse of notations, we still use $W^h,W^c$ to denote
spatial variables in hyperbolic and elliptic directions and write
the transformed system as
\begin{equation}\label{eq3.14}
\left\{
\begin{aligned}
& W_\tau^h=AW^h+\tilde F(W^h,W^c,\ep),\\
& W_\tau^c=\frac J\ep W^c+\bar
G(W^h,W^c,\ep)W^c+e^{\alpha(\ep)}\tilde G(W^h,W^c,\ep),
\end{aligned}\right.
\end{equation}
where $\alpha(\ep)=-\frac{c\log{\sqrt{2}}}{\ep}$ and
\begin{equation}\label{eq3.15}
\ep|\tilde F|_{C^2}+\ep|\bar G|_{C^2}+|\tilde G|_{C^2}\leq C\ep.
\end{equation}
Moreover, it is straightforward to verify
\begin{equation}\label{eq3.16}
\tilde F(0,0,\ep)=0\ , \ D\tilde F(0,0,0)=0\ , \ \bar G(0,0,0)=0\ ,\ \tilde G(0,0,\ep)=0.
\end{equation}

\section{Invariant Manifolds}\label{IM}
In this section, we study invariant manifolds of \eqref{eq3.14}
and their approximations. More precisely, we first consider a regular perturbation problem of \eqref{eq2.9} and show that it can serve as the leading order approximation of \eqref{eq3.14}. Then we construct various local invariant manifolds of the regular perturbation problem and \eqref{eq3.14} and compare them in terms of $\ep$.\\
\\
\noindent{\bf Leading Order Approximation.} We consider a regular
perturbation problem
\begin{equation}\label{eq4.1}
W_{\star\tau}^h=AW_\star^h+\tilde F(W^h,0,\ep)
\end{equation}
and its linearized problem
\begin{equation}\label{eq4.6}
\left\{\begin{aligned} &(\delta W_{\star}^h)_\tau=A \delta
W_\star^h+D_1\tilde F(W_\star^h,0,\ep)\delta W_\star^h,\\
&(\delta W_\star^c)_\tau=\frac J\ep\delta W_\star^c+ \overline
G(W_\star^h,0,\ep)\delta W_\star^c,
\end{aligned}\right.
\end{equation}
where the $\delta W_\star^c$-equation is used to track the evolution
in normal directions. Since $J$ is anti-selfadjoint on $Y_1$ and $\bar G(W^h,0,\ep)\in
L(Y_1,Y_1)$, $\frac J\ep+\bar G(W^h,0,\ep)$ generates an
evolutionary operator $E(t,s;\ep)$ on $Y_1$ with
\[|E(\tau,s;\ep)|_{L(Y_1,Y_1)}\leq e^{C|\tau-s|},\] see a proof in
\cite{Pa83}. In the following, we will show that solutions of \eqref{eq3.14} and its linearization can be approximated by \eqref{eq4.1} and
\eqref{eq4.6}. Let $\tilde G_\star^\ep(W^h)\triangleq\tilde
G(W^h,0,\ep)$, we have

\begin{theorem}\label{thm4.3}
Let $T>0$ and $(W^h(\tau),W^c(\tau))$ and
$W_\star^h(t)$ be solutions of \eqref{eq3.14} and \eqref{eq4.1} with
$W^h(0)=W_\star^h(0)$ and $W^c(0)=0$. Then there exists $C>0$
depending on $T$ such that
\begin{equation}\label{eq4.4}
|W^h(\tau)-W_\star^h(\tau)|_X+|W^c(\tau)|\leq
Ce^{\alpha(\ep)},
\end{equation}
for all $\tau\in[0,T]$.
\end{theorem}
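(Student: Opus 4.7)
The plan is to establish \eqref{eq4.4} in two stages: first control $|W^c(\tau)|_{Y_1}$ via variation of parameters in the normal direction, then bootstrap that bound into a Gronwall estimate for the hyperbolic difference $W^h - W_\star^h$. A standard continuation argument based on the $C^2$ bounds \eqref{eq3.15} ensures that $(W^h(\tau), W^c(\tau))$ stays inside $B_K(0, X \times Y_1)$ on $[0,T]$, so those bounds together with the vanishing conditions \eqref{eq3.16} are available throughout.

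For the normal direction, I would regard the $W^c$-equation in \eqref{eq3.14} as a perturbation of the unitary flow generated by $J/\ep$. Since $J$ is anti-selfadjoint, $e^{\tau J/\ep}$ is a unitary group on $Y_1$, so Duhamel's formula and $W^c(0) = 0$ give
\[
W^c(\tau) = \int_0^\tau e^{(\tau-s)J/\ep}\bigl[\bar G(W^h, W^c, \ep) W^c(s) + e^{\alpha(\ep)}\tilde G(W^h, W^c, \ep)\bigr]\, ds.
\]
Using the bounds $|\bar G|_{C^0}, |\tilde G|_{C^0} \leq C$ on $B_K$ (which are immediate from \eqref{eq3.15} after absorbing $\ep$-factors), this yields
\[
|W^c(\tau)|_{Y_1} \leq C\int_0^\tau |W^c(s)|_{Y_1}\, ds + C T e^{\alpha(\ep)},
\]
and Gronwall's inequality then gives $|W^c(\tau)|_{Y_1} \leq C_T e^{\alpha(\ep)}$ on $[0,T]$.

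For the hyperbolic direction, subtracting \eqref{eq4.1} from the $W^h$-equation in \eqref{eq3.14} and splitting the nonlinearity as
\[
\tilde F(W^h, W^c, \ep) - \tilde F(W_\star^h, 0, \ep) = \bigl[\tilde F(W^h, W^c, \ep) - \tilde F(W^h, 0, \ep)\bigr] + \bigl[\tilde F(W^h, 0, \ep) - \tilde F(W_\star^h, 0, \ep)\bigr]
\]
and applying the Lipschitz bounds from \eqref{eq3.15} produces
\[
\frac{d}{d\tau}|W^h - W_\star^h|_X \leq C|W^h - W_\star^h|_X + C|W^c|_{Y_1}.
\]
Together with $W^h(0) = W_\star^h(0)$ and the estimate on $W^c$, one more Gronwall argument delivers $|W^h(\tau) - W_\star^h(\tau)|_X \leq C_T e^{\alpha(\ep)}$, completing \eqref{eq4.4}.

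The main obstacle is keeping constants uniform in $\ep$ despite the singular factor $J/\ep$. The key observation is that the anti-selfadjointness of $J$ makes $e^{\tau J/\ep}$ unitary on $Y_1$, so the $1/\ep$ produces only rapid oscillation with no growth in norm; the exponentially small forcing $e^{\alpha(\ep)}\tilde G$ then propagates cleanly to an $O(e^{\alpha(\ep)})$ bound on $W^c$ over any fixed time interval. A secondary issue is the a priori control that keeps $(W^h, W^c)$ inside $B_K$, which is handled by a short continuation argument exploiting the smallness of $e^{\alpha(\ep)}$ and the boundedness of $W_\star^h$ on $[0,T]$.
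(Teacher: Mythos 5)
Your proof is correct and takes essentially the same route as the paper: write mild (Duhamel) formulas for both components, exploit that $e^{\tau J/\ep}$ is unitary while $|e^{\tau A}|$ grows only like $e^{\tau}$, and close with Gronwall over $[0,T]$. The only organizational difference is that you first close the Gronwall estimate for $W^c$ alone and then feed that into the estimate for $W^h-W_\star^h$, whereas the paper runs a single coupled Gronwall on the sum $|W^h-W_\star^h|_X+|W^c|_{Y_1}$; this is a cosmetic reordering, not a different argument. Your explicit continuation remark to keep the solution inside $B_K(0,X\times Y_1)$ is a reasonable addition that the paper leaves implicit, and your splitting of $\tilde F(W^h,W^c,\ep)-\tilde F(W_\star^h,0,\ep)$ matches the Lipschitz bound the paper invokes.
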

\begin{proof}
Using Duhamel's principle, we have
\[\begin{aligned}
&(W^h-W_\star^h)(\tau)=\int_0^\tau e^{(\tau-s)A}(\tilde
F(W^h,W^c,\ep)-\tilde F(W^h,0,\ep))\ ds,\\
&W^c(\tau)=\int_0^\tau e^{(\tau-s)\frac J\ep}\big(\overline
G(W^h,W^c,\ep)W^c+e^{\alpha(\ep)}\tilde G(W^h,W^c,\ep)\big)\ ds,
\end{aligned}\]
which implies
\[\begin{aligned}
&|(W^h-W_\star^h)(\tau)|_X\leq\int_0^\tau
e^{(\tau-s)}|D\tilde F|_{C^0}(|W^h-W_\star^h|_X+|W^c|_{Y_1})\ ds,\\
&|W^c(\tau)|_{Y_1}\leq\int_0^\tau (|\overline
G|_{C^0}+e^{\alpha(\ep)}|D\tilde
G|_{C^0})(|W^h-W_\star^h|_X+|W^c|_{Y_1})+e^{\alpha(\ep)}|\tilde
G_\star^\ep|_{C^0}\ ds.
\end{aligned}\]
Consequently, the Gronwall's inequality gives
\[|(W^h-W_\star^h)(\tau)|_X+|W^c(\tau)|_{Y_1}\leq C
e^{\alpha(\ep)}.\]
\end{proof}
\begin{rem}\label{rem4.4}
If $|W^h(0)-W_\star^h(0)|_X+|W^c|_{Y_1}\leq Ce^{\alpha(\ep)}$, the
same result still holds.
\end{rem}

Linearizing \eqref{eq3.14}, we obtain
\begin{equation}\label{eq4.5}
\left\{\begin{aligned} \delta W_\tau^h=&A\delta W^h+D_1\tilde
F\delta W^h+D_2\tilde F\delta
W^c,\\
\delta W_\tau^c=&\frac J\ep\delta W^c+\bar G\ \delta W^c+
D_1\bar G(\delta W^h,W^c)+ D_2\bar G(\delta W^c,W^c)\\
&+e^{\alpha(\ep)}(D_1\tilde G\ \delta W^h+D_2\tilde G\ \delta W^c),
\end{aligned}\right.
\end{equation}
where $D_1\tilde F,D_2\tilde F,\bar G,D_1\bar G,
D_2\bar G,D_1\tilde G,D_2\tilde G$ are evaluated at
$(W^h,W^c,\ep)$.

\begin{theorem}\label{thm4.5}
Assume the same conditions in Theorem \ref{thm4.3}. In addition, we
assume
\[\delta W^h(0)=\delta W_\star^h(0)\ , \ \delta W^c(0)=\delta W_\star^c(0)
\ , \ |\delta W_\star^h(0)|_X+|\delta W_\star^c(0)|_{Y_1}\leq 1.\]
For any $T>0$, there exists $C>0$ depending on $T$ such that for $\tau\in[0,T]$,
\begin{equation}\label{eq4.7}
|\delta W^h(\tau)-\delta W_\star^h(\tau)|_X\leq C\ep\ , \ |\delta W^c(\tau)-\delta
W_\star^c(\tau)|_{Y_1}\leq Ce^{\alpha(\ep)}.
\end{equation}
\end{theorem}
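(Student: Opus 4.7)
The plan is to derive integral equations for the two differences via Duhamel, control the $c$-component directly by Gronwall, and then exploit the fast oscillation driven by $J/\ep$ to extract the extra factor of $\ep$ in the $h$-component. Subtracting \eqref{eq4.6} from \eqref{eq4.5} and splitting coefficients such as $\tilde F(W^h,W^c,\ep) = \tilde F(W_\star^h,0,\ep) + [\tilde F(W^h,W^c,\ep) - \tilde F(W_\star^h,0,\ep)]$, the bound $|W^h - W_\star^h|_X + |W^c|_{Y_1} \leq Ce^{\alpha(\ep)}$ from Theorem \ref{thm4.3} combined with the $C^2$ estimates \eqref{eq3.15} reduces every coefficient mismatch to an $O(e^{\alpha(\ep)})$ contribution.

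For the $c$-component, every source term that is not absorbed into Gronwall's inequality either carries an explicit $e^{\alpha(\ep)}$ factor (from the $e^{\alpha(\ep)}\tilde G$ terms in \eqref{eq4.5}) or involves $W^c$, which is itself $O(e^{\alpha(\ep)})$; this covers $D_1\bar G(\delta W^h,W^c)$ and $D_2\bar G(\delta W^c,W^c)$. Since $J/\ep$ is anti-selfadjoint on $Y_1$ by \eqref{eq2.5.2} and generates a unitary group, while $\bar G \in L(Y_1,Y_1)$ is uniformly bounded, the evolution operator for $J/\ep + \bar G$ is bounded by $e^{CT}$ on $[0,T]$, and Gronwall yields $|\delta W^c - \delta W_\star^c|_{Y_1} \leq Ce^{\alpha(\ep)}$ immediately.

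The harder half is the $O(\ep)$ bound for $\delta W^h - \delta W_\star^h$. The Duhamel integrand contains $D_2\tilde F(W^h,W^c,\ep)\delta W^c$, which is only $O(1)$ because $|\delta W^c|_{Y_1}$ is merely bounded. Using the $c$-bound just proved and Theorem \ref{thm4.3}, I would replace $\delta W^c$ by $\delta W_\star^c$ and $(W^h,W^c)$ by $(W_\star^h,0)$ in the coefficient, each at cost $O(e^{\alpha(\ep)})$. The principal contribution is then
\[
I(\tau) = \int_0^\tau e^{(\tau-s)A}\, D_2\tilde F(W_\star^h(s),0,\ep)\, \delta W_\star^c(s)\, ds.
\]
From \eqref{eq4.6} one has $\delta W_\star^c = \ep J^{-1}\partial_s \delta W_\star^c - \ep J^{-1} \bar G(W_\star^h,0,\ep)\delta W_\star^c$; the second piece contributes $O(\ep)$ trivially. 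Integrating by parts in the first piece transfers $\partial_s$ onto the smooth factor $e^{(\tau-s)A} D_2\tilde F(W_\star^h,0,\ep)J^{-1}$, and both the boundary terms and the new integrand are $O(\ep)$ by \eqref{eq2.5.2}, the $C^2$ bound on $\tilde F$ from \eqref{eq3.15}, and the boundedness of $\partial_s W_\star^h$ provided by \eqref{eq4.1}.

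The main obstacle is this integration-by-parts step: one must verify that every derivative produced lands on a quantity controlled uniformly in $\ep$. The dangerous possibility is a $\partial_s W^c$ appearing, which scales like $1/\ep$; this is precisely why the initial reduction to $(W_\star^h,0,\delta W_\star^c)$ is essential, since afterwards derivatives only hit $W_\star^h$, the $A$-semigroup, and smooth functions of the slow variable. Once $I(\tau) = O(\ep)$ is in hand, a final Gronwall in the $h$-equation, absorbing the $D_1\tilde F(W_\star^h,0,\ep)(\delta W^h - \delta W_\star^h)$ term on the left, closes the estimate and delivers $|\delta W^h - \delta W_\star^h|_X \leq C\ep$.
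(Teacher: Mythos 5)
Your proposal is correct and follows essentially the same route as the paper: Gronwall for the $c$-difference, then integration by parts against the fast $J/\ep$ oscillation (via $\delta W^c = \ep J^{-1}\partial_\tau\delta W^c + O(\ep)$) to upgrade the $D_2\tilde F\,\delta W^c$ term to $O(\ep)$. The only cosmetic difference is that you first replace $\delta W^c$ by $\delta W_\star^c$ and the coefficient arguments by $(W_\star^h,0)$ before integrating by parts, whereas the paper keeps $\delta W^c$ and $W^h$ and only sets the $W^c$-slot in $D_2\tilde F$ to zero; both are valid since $\partial_\tau W^h$ is $O(1)$ (and the potentially dangerous $\partial_\tau W^c\sim e^{\alpha(\ep)}/\ep$, which you flagged, is in fact harmless against the factor $\ep J^{-1}\delta W^c$, though your extra reduction sidesteps even having to observe this).
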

\begin{proof}
By standard ODE theory in Banach space, we have
\begin{equation}\label{eq4.8}
|(\delta W^h,\delta W^c)+(\delta W_\star^h,\delta
W_\star^c)|_{C^0([0,T],X\times Y_1)}\leq C.
\end{equation}
First we use \eqref{eq4.5} and \eqref{eq4.6} to obtain
\begin{eqnarray}\label{eq4.8.1}
(\delta W^h-\delta W_\star^h)_\tau&=&A(\delta W^h-\delta
W_\star^h)+D_1\tilde
F(W_\star^h,0,\ep)(\delta W^h-\delta W_\star^h)\\\nonumber
&&+D_2\tilde  F(W^h,0,\ep)\delta W^c+h_1(t,\ep),\\\label{eq4.8.2}
(\delta W^c-\delta W_\star^c)_\tau&=&(\frac J\ep+\overline
G(W_\star^h,0,\ep))(\delta W^c-\delta W_\star^c)+h_2(t,\ep).
\end{eqnarray}
Due to \eqref{eq4.4}, we know $h_{1,2}$ satisfy
\begin{eqnarray*}
|h_1(t,\ep)|_X &\leq& |D^2\tilde F|_{C^0}(|W^h-W_\star^h|_X+|W^c|_{Y_1})(|\delta
W^h|_X+|\delta W^c|_{Y_1})\leq Ce^{\alpha(\ep)},\\
|h_2(t,\ep)|_{Y_1} &\leq& (e^{\alpha(\ep)}|D\tilde
G|_{C^0}+|D\bar G|_{C^0}|W^c|_{Y_1})(|\delta W^h|_X+|\delta
W^c|_{Y_1})\\
&&+|D\bar G|_{C^0}(|W^h-W_\star^h|_X+|W^c|_{Y_1})|\delta W^c|_{Y_1}\leq Ce^{\alpha(\ep)}.
\end{eqnarray*}
Applying Duhamel's principle to \eqref{eq4.8.2} yields
\[|\delta W^c(\tau)-\delta W_\star^c(\tau)|_{Y_1} \leq Ce^{\alpha(\ep)}\ , \ \tau\in[0,T].\]
To finish the proof, it suffices to show the $O(1)$ term $D_2\tilde F(W^h,0,\ep)\delta W^c$ becomes $O(\ep)$ after integration, namely,
\[|\int_0^\tau e^{(\tau-s)A}D_2\tilde F(W^h(s),0,\ep)\delta W^c(s)\ ds|\leq C\ep.\]
We note from the second equation of \eqref{eq4.5} that $\delta W^c=\ep J^{-1}(\delta W^c)_\tau+O(\ep)$. Therefore,
\[\begin{aligned}
&|\int_0^\tau e^{(\tau-s)A}D_2\tilde F(W^h(s),0,\ep)\delta W^c(s)\ ds|\\
\leq& |\int_0^t e^{(\tau-s)A}\p_\tau D_2\tilde F(W^h(s),0,\ep)\ep J^{-1}\delta W^c(s)\ ds|+O(\ep)\leq C\ep.
\end{aligned}\]
\end{proof}
\begin{rem}\label{rem4.6}
In the above theorem, if we instead assume
\[|\delta W^h(0)-\delta W_\star^h(0)|_X\leq C\ep\ , \ |\delta W^c(0)-\delta W_\star^c(0)|_{Y_1}\leq Ce^{\alpha(\ep)},\] the same result still holds. Furthermore, if we assume
\[\delta W_\star^c(0)=0\ , \ |\delta W^h(0)-\delta W_\star^h(0)|_X+|\delta W^c(0)-\delta W_\star^c(0)|_{Y_1}\leq Ce^{\alpha(\ep)},\] 
the same proof implies $|\delta W^h(\tau)-\delta W_\star^h(\tau)|\leq Ce^{\alpha(\ep)}$.
\end{rem}

\noindent{\bf Invariant Manifolds.} From \eqref{eq3.16}, we know the origin is a fixed point of 
\eqref{eq3.14}. We shall use the Lyapunov-Perron integral equation to construct
various local invariant manifolds around the fixed point. First we
write $X=P_sX\oplus P_u X\triangleq X^s\oplus X^u$, where $X_{s,u}$
are eigenspaces corresponding to eigenvalues $\pm1$ of $A$. Clearly,
$Y_1$ should be considered as the center subspace. Since $J$ is
anti-selfadjoint on $Y_1$, $J$ generates a unitary group on $Y_1$. Thus,
\begin{equation}\label{eq4.9}
|e^{\tau A}\big|_{X^s}|\leq e^{-\tau}\ , \tau\geq0\ , \ |e^{\tau
A}\big|_{X^u}|\leq e^{\tau}\ , \tau\leq0\ , \ |e^ {\tau J}|\leq 1\ ,
\tau\in\mathbb{R}.
\end{equation}
Since we are working in a Hilbert space,
there always exist smooth cut-off functions. In the construction of center-stable and
center-unstable manifolds, we follow the standard procedure to modify nonlinear terms outside a
neighborhood of the fixed point so that they have
global small Lipschitz constants. With slight abuse of
notation, we still use the same notation after multiplying cut-off functions. Define transformations
$\mathscr{T}_{cs(cu)}^\ep(\cdot,\cdot,W_{cs(cu)})$ for
$W_{cs(cu)}=(W_{s(u)},W_c)\in X_{s(u)}\times Y_1$ as
\begin{eqnarray*}
&&\mathscr{T}_{cs(cu)}^\ep(W^h,W^c;W_{cs(cu)})(\tau)\\
&\triangleq& \begin{pmatrix} e^{\tau A}W_{s(u)}\\ e^{\tau\frac
J\ep}W_c
\end{pmatrix}+\int_0^\tau\begin{pmatrix}
e^{(\tau-s)A}P_{s(u)}\tilde F(W^h,W^c,\ep)\\
e^{(\tau-s)\frac J\ep}\big(\bar
G(W^h,W^c,\ep)W^c+e^{\alpha(\ep)}\tilde G(W^h,W^c,\ep)\big)
\end{pmatrix}\ ds\\
&&+\int_{+\infty(-\infty)}^\tau\begin{pmatrix}
e^{(\tau-s)A}P_{u(s)}\tilde F(W^h,W^c,\ep)\\
0
\end{pmatrix}\ ds.
\end{eqnarray*}
For $\eta\in\mathbb{R}$, we define function spaces
\begin{equation}\label{eq4.9.1}
\begin{aligned}
B_\eta^{\pm}(\rho)\triangleq \Big\{(W^h,W^c)\in C^0(\R^{\pm},&X\times
Y_1)\big|\\
&\sup_{\tau\geq0(\leq0)} e^{-\eta
\tau}(|W^h(\tau)|_X+|W^c(\tau)|_{Y_1})<\rho\Big\}\end{aligned}
\end{equation}
with norm
\[|(W^h,W^c)|_\eta^{\pm}=\sup_{\tau\geq0(\leq0)}
e^{-\eta \tau}(|W^h(\tau)|_X+|W^c(\tau)|_{Y_1}).\]
We also use $|(\cdot,\cdot)|_\eta^\pm$ to denote the
norm of bounded linear operators from $X^s\times Y_1$ to
$B_\eta^\pm(\infty)$, where $B_\eta^\pm(\infty)$ denotes the
corresponding linear spaces defined in \eqref{eq4.9.1}.

It is straightforward to verify that given any $r>0$ there exists
$\rho(r)$ such that $\mathscr{T}_{cs}^\ep$ defines a contraction
mapping on $B_\eta^+(\rho(r))$ for $\eta$ in any compact subset of
$[0,1]$ and $|W_{cs}|_{X\times Y_1}<r$. Similar result holds for
$\mathscr{T}_{cu}^\ep$ on $B_\eta^-(\rho)$ with $\eta$ in any
compact subset of $[-1,0]$. Let $(W^h,W^c)$ be the fixed point of
$\mathscr{T}_{cs(cu)}^\ep(\cdot,\cdot,W_{cs(cu)},\ep)$ in
$B_{\pm}(\rho)$ and
\[h_{u(s)}(W_{s(u)},W_c,\ep)\triangleq P_{u(s)}\mathscr{T}_{cs(cu)}^\ep(W^h,W^c;W_{cs(cu)})(0).\]
We define
\[
\mathcal{M}_{cs(cu)}^\ep\triangleq\big\{W_{s(u)}+W_c+h_{u(s)}(W_{s(u)},W_c,\ep)\big||W_{s(u)}|_X+|W_c|_{Y_1}<r\big\}.
\]
Therefore, $\mathcal{M}_{cs(cu)}^\ep$ are global center-stable and
center-unstable manifolds of the origin of the modified version of \eqref{eq3.14}. By
choosing $r$ small enough, $\mathcal{M}_{cs(cu)}^\ep$ are local
ceneter-stable and center-unstable manifolds of $(0,0)$ of
\eqref{eq3.14}. One should note that $\mathcal{M}_{cs(cu)}^\ep$ are
well-defined in a $O(1)$ neighborhood, which is crucial to our
analysis when $\ep\to 0$.

In the construction of stable and unstable manifolds, since we are
looking for solutions with truely exponential decay forward and
backward in time, there is no need to modify nonlinear terms. As a
consequence, we obtain the uniqueness of stable and unstable
manifolds. We define transformations
\begin{eqnarray*}
&&\mathscr{T}_{s(u)}^\ep(W^h,W^c;W_{s(u)})(\tau)\\
&\triangleq& \begin{pmatrix} e^{\tau A}W_{s(u)}\\ 0
\end{pmatrix}+\int_0^\tau\begin{pmatrix}
e^{(\tau-s)A}P_{s(u)}\tilde F(W^h,W^c,\ep)\\
0
\end{pmatrix}\ ds\\
&&+\int_{+\infty(-\infty)}^\tau\begin{pmatrix}
e^{(\tau-s)A}P_{u(s)}\tilde F(W^h,W^c,\ep)\\
e^{(\tau-s)\frac J\ep}\big(\overline
G(W^h,W^c,\ep)W^c+e^{\alpha(\ep)}\tilde G(W^h,W^c,\ep)\big)
\end{pmatrix}\ ds,
\end{eqnarray*}
which are contraction on $B_\eta^{\pm}(\rho)$. Let
$(W^h,W^c)$ be the fixed point of $\mathscr{T}_{s(u)}^\ep$ and
\[h_{cu(cs)}(W_{s(u)},\ep)\triangleq(I-P_{s(u)})\mathscr{T}_{s(u)}^\ep(W^h,W^c;W_{s(u)})(0).\]
The stable and unstable manifolds are given by
\[\mathcal{M}_{s(u)}^\ep\triangleq\big\{W_{s(u)}+h_{cu(cs)}(W_{s(u)},\ep)\big||W_{s(u)}|_X<r\big\}.\]

For the regular perturbation problem \eqref{eq4.1}, we consider
\begin{eqnarray*}
\mathscr{T}_{s(u)}^\star(W_\star^h;W_{s(u)})(\tau)&\triangleq&
e^{tA}W_{s(u)} +\int_0^\tau e^{(\tau-s)A}P_{s(u)}\tilde
F(W_\star^h,0,\ep)\ ds\\
&&+\int_{+\infty(-\infty)}^\tau
e^{(\tau-s)A}P_{u(s)}\tilde F(W_\star^h,0,\ep)\ ds.
\end{eqnarray*}
Let $W_\star^h$ be the fixed point of
$\mathscr{T}_{s(u)}^\star(\cdot;W_{s(u)})$ on function spaces
\[\Big\{W^h\in C^0(\R^{\pm},X)\big|
\sup_{\tau\geq0(\leq0)} e^{-\eta
\tau}|W^h(\tau)|_X<\rho\Big\}\] and let
\[h_{u(s)}^\star(W_{s(u)})\triangleq P_{u(s)}\mathscr{T}_{s(u)}^\ep(W_\star^h;W_{s(u)})(0).\]
We define the stable and unstable manifold of \eqref{eq4.1} as
\[
\mathcal{M}_{s(u)}^\star\triangleq\big\{W_{s(u)}+h_{u(s)}^\star(W_{s(u)})\big||W_{s(u)}|_X+|W_c|_{Y_1}<r\big\}.
\]
In the following, we give our main theorem on center-stable and
unstable manifolds of \eqref{eq3.14}. Similar results and estimates also hold for
center-unstable and stable manifolds.
\begin{theorem}\label{thm4.7}
Assume (H). For the system \eqref{eq3.14}, we have
\begin{enumerate}
\item[1)] There exist $r>0$, $\ep_0>0$ and a mapping $h_u:B_r(0,X^s\times
Y_1)\times(0,\ep_0)\longrightarrow X^u$ such that its graph
$\mathcal{M}_{cs}^\ep$ forms a local center-stable manifold of
the origin.

\item[2)] $h_u$ is $C^2$ in $W_s$ and $W_c$ with norms independent of $\ep$. Moreover,
there exists $C$ independent of $\ep$ such that
\begin{equation}\label{eq4.10}
\begin{aligned}
&h_u(0,0,\ep)=0\ , \ |h_u(\cdot,0,\ep)-h_u^\star(\cdot)|_{C^1}\leq
Ce^{\alpha(\ep)}\ , \ |D_2h_u(W_s,0,\ep)|_{C^0}\leq C\ep.
\end{aligned}
\end{equation}

\item[3)] There exist $r>0$, $\ep_0>0$ and a mapping $h_{cs}:B_r(0,X^u)\times(0,\ep_0)\longrightarrow X^s\times Y_1$
such that its graph $\mathcal{M}_{u}^\ep$ is the unique local
unstable manifold of the origin.

\item[4)] $h_{cs}$ is $C^2$ in $W_u$ with norms independent of $\ep$. Moreover,
there exists $C$ independent of $\ep$ such that
\begin{equation}\label{eq4.11}
h_{cs}(0,\ep)=0\ , \ |Dh_{cs}(0,\ep)|\leq C\ep\ , \
|h_{cs}(\cdot,\ep)-h_s^\star(\cdot)|_{C^1}\leq Ce^{\alpha(\ep)}.
\end{equation}
\end{enumerate}
\end{theorem}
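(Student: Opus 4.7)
The plan is to construct both manifolds by standard Lyapunov--Perron fixed-point arguments for the integral operators $\mathscr{T}_{cs}^\ep$ and $\mathscr{T}_u^\ep$ defined in the excerpt, and then to extract the sharp $\ep$-dependent estimates by differentiating the resulting fixed-point identities and exploiting the unitarity bound $|e^{\tau J/\ep}|_{L(Y_1,Y_1)} \leq 1$ together with an integration-by-parts device modeled on the proof of Theorem \ref{thm4.5}. For (1) and (3), I fix $r > 0$ small, $\rho = \rho(r)$, and $\eta$ in a compact subset of $(0,1)$ (respectively $(-1,0)$), and verify that $\mathscr{T}_{cs}^\ep(\cdot,\cdot;W_{cs})$ is a uniform contraction on a closed ball of $B_\eta^+(\rho)$, with Lipschitz constants controlled by \eqref{eq3.15} and \eqref{eq4.9}. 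The crucial point is that no $1/\ep$ enters the contraction estimate because $J/\ep$ generates a unitary group on $Y_1$. The cut-off in the center-stable construction globalizes the small Lipschitz constants and yields a unique fixed point $(W^h,W^c)(\cdot;W_{cs})$; setting $h_u(W_s,W_c,\ep) \triangleq P_u\mathscr{T}_{cs}^\ep(W^h,W^c;W_{cs})(0)$ produces the graph. For $h_{cs}$ the same scheme is applied to $\mathscr{T}_u^\ep$; here no cut-off is needed since genuine backward exponential decay is imposed, which also gives uniqueness. Differentiating the fixed-point equations twice in the parameters and contracting in the same weighted spaces gives $C^2$-smoothness with norms independent of $\ep$, and \eqref{eq3.16} delivers $h_u(0,0,\ep) = 0$ and $h_{cs}(0,\ep) = 0$.

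The $C^1$-closeness estimates in \eqref{eq4.10} and \eqref{eq4.11} are obtained by coupling the $\ep$-dependent fixed-point equations to their $\ep \to 0$ counterparts $\mathscr{T}_s^\star$ and $\mathscr{T}_u^\star$. Fixing $|W_s|_X < r$, I let $(W^h(\tau),W^c(\tau))$ be the forward-decaying trajectory on $\mathcal{M}_{cs}^\ep$ issuing from $(W_s + h_u(W_s,0,\ep), 0)$ and $W_\star^h(\tau)$ the forward-decaying trajectory on $\mathcal{M}_s^\star$ issuing from $W_s + h_u^\star(W_s)$. Subtracting the two fixed-point equations and applying Gronwall in the weighted norm $|\cdot|_\eta^+$ absorbs the exponentially small source $\bar G W^c + e^{\alpha(\ep)}\tilde G$ via Theorem \ref{thm4.3}, yielding $|h_u(W_s,0,\ep) - h_u^\star(W_s)|_X \leq C e^{\alpha(\ep)}$. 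Differentiating both fixed-point equations in $W_s$, invoking Theorem \ref{thm4.5} and Remark \ref{rem4.6} (the variant with zero-center initial condition, controlling the $X$-component of the linearization to $O(e^{\alpha(\ep)})$), and contracting once more in $B_\eta^+$ upgrades the bound to $C^1$. The comparison for $|h_{cs} - h_s^\star|_{C^1}$ is symmetric under time reversal.

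The main obstacle is the pair of sharp $O(\ep)$ bounds $|D_2 h_u(W_s,0,\ep)|_{C^0} \leq C\ep$ and $|Dh_{cs}(0,\ep)| \leq C\ep$: a direct estimate of the Lyapunov--Perron integral yields only $O(1)$, since the kernel $e^{-sA}P_u D_2\tilde F$ is merely uniformly bounded and $\delta W^c$ is $O(1)$. To gain the extra factor of $\ep$, I would mimic the integration-by-parts trick from the proof of Theorem \ref{thm4.5}. Differentiating the defining relation for $h_u$ in $W_c$ and restricting to $\{W_c = 0\}$ expresses $D_2 h_u$ through
\[
\int_{+\infty}^{0} e^{-sA} P_u \bigl(D_1\tilde F\,\delta W^h + D_2\tilde F\,\delta W^c\bigr)(s)\, ds,
\]
where $(\delta W^h,\delta W^c)$ is the linearization along the reference trajectory. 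On the slice the $\delta W^c$-equation reduces to $(\delta W^c)_\tau = \tfrac{J}{\ep}\delta W^c + \bar G\,\delta W^c + O(e^{\alpha(\ep)})$, so by \eqref{eq2.5.2} one rewrites $\delta W^c = \ep J^{-1}(\delta W^c)_\tau + O(\ep)$. Substituting and integrating by parts transfers $\p_\tau$ onto the smooth, $\ep$-independent factor $e^{-sA}P_u D_2\tilde F(W^h(s),0,\ep)$, leaving an overall $\ep$ prefactor together with controlled boundary terms. The analogous manipulation at $\tau = 0$ on the unstable-manifold fixed-point equation produces the $O(\ep)$ bound on $Dh_{cs}(0,\ep)$ in \eqref{eq4.11}, completing the proof.
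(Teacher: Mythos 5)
Your proposal follows essentially the same route as the paper: Lyapunov--Perron contraction in the weighted spaces $B_\eta^\pm$, using unitarity of $e^{\tau J/\ep}$ to avoid $1/\ep$ losses, comparison with the fixed point of $\mathscr{T}_{s(u)}^\star$ to get the $O(e^{\alpha(\ep)})$ $C^1$-closeness, and the identity $\psi = \ep J^{-1}\psi_\tau + O(\ep) + O(e^{\alpha(\ep)})$ plus integration by parts to extract the $O(\ep)$ bound on $D_2 h_u$. The paper's proof of Theorem~\ref{thm4.7} carries out exactly these steps (see \eqref{eq4.13}--\eqref{eq4.20}), and like you it disposes of \eqref{eq4.11} by declaring the argument symmetric, so your treatment of $|Dh_{cs}(0,\ep)|\leq C\ep$ is no less detailed than the paper's.
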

\begin{proof}
Part 1) and 3) have been proved in above. The smoothness and $\ep$-independent estimates of $h_{u(cs)}$ also
follow from the standard argument, see \cite{LZ10} for more details. We will focus on \eqref{eq4.10}
and \eqref{eq4.11} can be obtained in a similar way. Let
\[\sigma\triangleq|D\tilde F|_{C^0}+|D(\bar G W^c)|_{C^0},\] which can be taken arbitrarily small by choosing appropriate cut-off functions. First we note from \eqref{eq3.16} that if $W_h=0,W_c=0$, $(0,0)$
is a fixed point of $\mathscr{T}_{cs}^\ep$. By uniqueness, we have
\[h_u(0,0,\ep)=0.\]
For $0<\eta<\frac 12$, we choose $\sigma$ sufficiently small so that
for any $\eta'\in[\eta,2\eta]$,
\begin{equation}\label{eq4.12}
1-\frac {1}{\eta'}(\sigma+e^{\alpha(\ep)}|D\tilde
G|_{C^0})-\frac{1}{1-\eta'}\sigma>\frac 12.
\end{equation}
Given $r>0$ and any $W_{cs}\in B_r(0,X^s\times Y_1)$ we let
$(W^h,W^c)$ be the unique fixed point of
$\mathscr{T}_{cs}^\ep(\cdot,\cdot;W_{cs})$ in $B_{\eta'}^+(\rho)$
for $\eta'\in[\eta,2\eta]$. By using \eqref{eq3.16}, we have
\begin{equation}\label{eq4.13}
|(W^h,W^c)|_{\eta'}^+\leq\frac{r}{1-\frac
{1}{\eta'}(\sigma+e^{\alpha(\ep)}|D\tilde
G|_{C^0})-\frac{1}{1-\eta'}\sigma}\leq 2r.
\end{equation}
Let $(\phi,\psi)$ be the derivative of $(W^h,W^c)$ with respect to
$W_{cs}$ which satisfies
\begin{equation}\label{eq4.14}
\begin{aligned}
\begin{pmatrix} \phi \\
\psi
\end{pmatrix}(\tau)=&\begin{pmatrix}
e^{\tau A}\\
e^{\tau\frac J\ep}
\end{pmatrix}+\int_{+\infty}^\tau\begin{pmatrix} e^{(\tau-s)A}P_u D\tilde
F(\phi,\psi)\\0
\end{pmatrix}\ ds\\
&+\int_0^\tau\begin{pmatrix} e^{(\tau-s)A}P_s D\tilde
F(\phi,\psi)\\
e^{(\tau-s)\frac J\ep}\big(D(\bar GW^c)(\phi,\psi)+e^{\alpha(\ep)}D\tilde G(\phi,\psi)\big)\end{pmatrix}\ ds
\end{aligned}
\end{equation}
which implies for any $\eta'\in[\eta,2\eta]$,
\begin{equation}\label{eq4.15}
|(\phi(\tau),\psi(\tau))|_{\eta'}^+\leq2.
\end{equation}

Fix $W_c=0$ and let $W_\star^h$ be the fixed point of
$\mathscr{T}_s^\star(\cdot;W_s)$. We have
\begin{eqnarray*}
&&(W^h-W_\star^h)(\tau)=\int_0^\tau e^{(\tau-s)A}P_s\big(\tilde
F(W^h,W^c,\ep)-\tilde F(W_\star^h,0,\ep)\big)\ ds\\
&&\hspace{3cm}+\int_{+\infty}^\tau e^{(\tau-s)A}P_u\big(\tilde
F(W^h,W^c,\ep)-\tilde F(W_\star^h,0,\ep)\big)\ ds,\\
&&W^c(\tau)=\int_{+\infty}^\tau e^{(\tau-s)\frac J\ep}
\big(\bar G(W^h,W^c,\ep)+e^{\alpha(\ep)}\tilde G(W^h,W^c,\ep)\big)\ ds.
\end{eqnarray*}
Along with
\eqref{eq3.16} we obtain
\begin{equation}\label{eq4.16}
|(W^h-W_\star^h,W^c)|_{\eta'}^+\leq\frac{e^{\alpha(\ep)}|D\tilde
G|_{C^0}|(W^h,W^c)|_{\eta'}^+}
{1-\frac{\sigma}{\eta'}-\frac{\sigma}{1-\eta'}}\leq
4re^{\alpha(\ep)}|D\tilde G|_{C^0}.
\end{equation}
Consequently,
\[\begin{aligned}|h_u(W_s,0,\ep)-h_u^\star(W_s)|_{X^u}&=|(W^h(0)-W_\star^h(0),W^c(0))|_{X\times Y_1}\\
&\leq |(W^h-W_\star^h,W^c)|_{\eta'}^+ \leq 4re^{\alpha(\ep)}|D\tilde
G|_{C^0}.\end{aligned}\] 

Let $\phi_\star=DW_\star^h(W_s)$ which
satisfies
\[\phi_\star(\tau)=e^{\tau A}+\int_0^\tau e^{(\tau-s)A}P_sD_1\tilde F(W_\star^h,0,\ep)\phi_\star\ ds
+\int_{+\infty}^\tau e^{(\tau-s)A}P_uD_1\tilde
F(W_\star^h,0,\ep)\phi_\star\ ds.\] With slight abuse of notation,
we still use $(\phi,\psi)$ to denote derivative of $(W^h,W^c)$ with
respect to $W_s$ or $W_c$ at $W_c=0$. For the derivative with
respect to $W_s$, $(W^h,W^c)$ satisfies the same equation as in
\eqref{eq4.14} except replacing $e^{\tau\frac J\ep}$ by $0$. Thus,
\[\begin{aligned}
\psi(\tau)=&\int_0^\tau e^{(\tau-s)\frac J\ep}(D_{W^h}\bar GW^c)\phi+D_{W^c}(\bar GW^c)\psi+e^{\alpha(\ep)}D\tilde G(\phi,\psi)\ ds,\end{aligned}\] 
which implies
\begin{equation}\label{eq4.17}
\begin{aligned}|\psi|_{2\eta}^+&\leq
\frac{1}{1-\frac{1}{2\eta}(\sigma+e^{\alpha(\ep)})}\frac{1}{2\eta}\big(|\phi|_\eta^+
4re^{\alpha(\ep)}|D\tilde G|_{C^0}+e^{\alpha(\ep)}|D\tilde
G|_{C^0}|\phi|_{2\eta}^+\big)\\
&\leq \frac{8r+2}{\eta}e^{\alpha(\ep)}|D\tilde G|_{C_0},
\end{aligned}
\end{equation}
where we also use \eqref{eq4.15} and \eqref{eq4.16}. For
$\phi-\phi_\star$, we have
\[\begin{aligned}
(\phi-\phi_\star)(\tau)=&\int_0^\tau e^{(\tau-s)A}P_s\big(D_1\tilde
F(W^h,W^c,\ep)\phi-D_1\tilde F(W_\star^h,0,\ep)\phi_\star+D_2\tilde
F\psi\big)\ ds\\
&+\int_{+\infty}^\tau e^{(\tau-s)A}P_u\big(D_1\tilde
F(W^h,W^c,\ep)\phi-D_1\tilde F(W_\star^h,0,\ep)\phi_\star+D_2\tilde
F\psi\big)\ ds.
\end{aligned}\]
Together with \eqref{eq4.15}, \eqref{eq4.16} and \eqref{eq4.17}, we
obtain
\[|\phi-\phi_\star|_{2\eta}^+\leq (\frac{\sigma}{2\eta}+\frac{\sigma}{1-2\eta})|\phi-\phi_\star|_{2\eta}^+
+Ce^{\alpha(\ep)}|D\tilde G|_{C^0},\] where $C$ depends on
$r,\eta,\sigma,|\phi_\star|_\eta^+,|D^2\tilde F|_{C^0}$.
Consequently, $|\phi-\phi_\star|_{2\eta}^+\leq
2Ce^{\alpha(\ep)}|D\tilde G|_{C^0}$. Therefore,
\begin{equation}\label{eq4.18}
|h_u(\cdot,0,\ep)-h_u^\star(\cdot)|_{C^1}\leq Ce^{\alpha(\ep)},
\end{equation}
where $C$ depends on $r,\eta,\sigma,|\phi_\star|_\eta^+,|D^2\tilde
F|_{C^0}$. For the derivative with respect to $W_c$ at $W_c=0$, we note
\begin{equation}\label{eq4.19}
\psi=\ep J^{-1}\psi_\tau-\ep J^{-1}D(\bar G W^c)
(\phi,\psi)-e^{\alpha(\ep)}D\tilde G(\phi,\psi).
\end{equation}
Plugging the above expression of $\psi$ into $\phi-\phi_\star$, it suffices to prove
\[\begin{aligned}
&\sup_{\tau\geq0}e^{-2\eta\tau}\Big|\int_0^\tau
e^{(\tau-s)A}P_sD_2\tilde F(W_\star^h,0,\ep)\ep J^{-1}\psi_s\ ds\big|\leq C\ep,\\
&\sup_{\tau\geq0}e^{-2\eta\tau}\Big|\int_{+\infty}^\tau
e^{(\tau-s)A}P_uD_2\tilde F(W_\star^h,0,\ep)\ep J^{-1}\psi_s\
ds\big|\leq C\ep.\end{aligned}.\] 
We will only prove the first part. Integrating by
parts to obtain
\begin{eqnarray*}
&&\int_0^\tau e^{(\tau-s)A}P_sD_2\tilde F(W_\star^h,0,\ep)\ep
J^{-1}\psi_s\ ds\\
&=&P_s D_2\tilde F(W_\star^h(\tau),0,\ep)\ep
J^{-1}\psi(\tau)-e^{\tau A}P_sD_2\tilde F(W_\star^h(0),0,\ep)\ep
J^{-1}\psi(0)\\
&&+\int_0^\tau Ae^{(\tau-s)A}P_sD_2\tilde F(W_\star^h,0,\ep)\ep
J^{-1}\psi\ ds\\
&&-\int_0^\tau e^{(\tau-s)A}P_sD_1D_2\tilde
F(W_\star^h,0,\ep)(AW_\star^h+\tilde F(W_\star,0,\ep),\ep
J^{-1}\psi)\ ds.
\end{eqnarray*}
By \eqref{eq4.15}, we have
\begin{equation}\label{eq4.20}
\sup_{\tau\geq0}e^{-2\eta\tau}\Big|\int_0^\tau
e^{(\tau-s)A}P_sD_2\tilde F(W^h,W^c,\ep)\ep J^{-1}\psi_s\ ds\big|
\leq C\ep,
\end{equation}
which completes the proof of part 2).
\end{proof}
We have similar results for center-unstable and stable manifolds.
\begin{theorem}\label{thm4.8}
Assume (H). For the system \eqref{eq3.14}, we have
\begin{enumerate}
\item[1)] There exist $r>0$, $\ep_0>0$ and a mapping $h_s:B_r(0,X^u\times
Y_1)\times(0,\ep_0)\longrightarrow X^s$ such that its graph
$\mathcal{M}_{cu}^\ep$ forms a local center-stable manifold of
the origin.

\item[2)] $h_s$ is $C^2$ in $W_u$ and $W_c$ with norms independent of $\ep$. Moreover,
there exists $C$ independent of $\ep$ such that
\begin{equation}\label{eq4.21}
h_s(0,0,\ep)=0\ , \ |h_s(\cdot,0,\ep)-h_s^\star(\cdot)|_{C^1}\leq Ce^{\alpha(\ep)}\ , \
|D_2h_s(W_s,0,\ep)|\leq C\ep.
\end{equation}

\item[3)] There exist $r>0$, $\ep_0>0$ and a mapping $h_{cu}:B_r(0,X^u)\times(0,\ep_0)\longrightarrow X^u\times Y_1$
such that its graph $\mathcal{M}_{s}^\ep$ is the unique local
unstable manifold of the origin.

\item[4)] $h_{cu}$ is $C^2$ in $W_s$ with norms independent of $\ep$. Moreover,
there exists $C$ independent of $\ep$ such that
\begin{equation}\label{eq4.22}
h_{cu}(0,\ep)=0\ , \ |h_{cu}(\cdot,\ep)-h_u^\star(\cdot)|_{C^1}\leq
Ce^{\alpha(\ep)}.
\end{equation}
\end{enumerate}
\end{theorem}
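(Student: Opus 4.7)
The proof mirrors that of Theorem \ref{thm4.7} with time-reversed arguments. The existence statements in parts 1) and 3) are already essentially established by the contraction properties of $\mathscr{T}_{cu}^\ep$ on $B_\eta^-(\rho)$ for $\eta$ in a compact subset of $[-1,0]$ and of $\mathscr{T}_u^\ep$ on $B_\eta^-(\rho)$, which were constructed earlier in Section \ref{IM}. The $C^2$ smoothness of $h_s$ and $h_{cu}$ in their respective variables, with $\ep$-independent norms, follows from the same implicit function/fiber contraction argument referenced in \cite{LZ10}, so I would only record these conclusions and focus entirely on the quantitative estimates \eqref{eq4.21} and \eqref{eq4.22}.

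For \eqref{eq4.21}, I would fix $W_c = 0$ and compare the fixed point $(W^h, W^c) \in B_{\eta'}^-(\rho)$ of $\mathscr{T}_{cu}^\ep(\cdot,\cdot;W_u,0)$ with the fixed point $W_\star^h$ of $\mathscr{T}_u^\star(\cdot;W_u)$, for $\eta' \in [-2\eta,-\eta]$ with $\eta > 0$ small. The vanishing $h_s(0,0,\ep)=0$ is immediate from \eqref{eq3.16} and uniqueness. Subtracting the two fixed point equations, only the term $e^{\alpha(\ep)}\tilde G$ and the coupling $\bar G W^c$ create nonzero source, so the standard contraction estimate (analogous to \eqref{eq4.16}) gives $|(W^h - W_\star^h, W^c)|_{\eta'}^- \leq C e^{\alpha(\ep)}$. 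Differentiating with respect to $W_u$ and repeating the argument for $(\phi - \phi_\star, \psi)$ — using the backward-time analogues of \eqref{eq4.14} and \eqref{eq4.15} — yields $|h_s(\cdot,0,\ep) - h_s^\star(\cdot)|_{C^1} \leq C e^{\alpha(\ep)}$.

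The main obstacle, as in Theorem \ref{thm4.7}, is the $\ep$-gain in the normal direction $|D_2 h_s(W_u,0,\ep)| \leq C\ep$. I would mimic the integration-by-parts trick around \eqref{eq4.19}: for the derivative $\psi$ of $W^c$ with respect to $W_c$ at $W_c=0$, the center equation on the backward interval yields $\psi = \ep J^{-1}\psi_\tau - \ep J^{-1}D(\bar G W^c)(\phi,\psi) - e^{\alpha(\ep)}D\tilde G(\phi,\psi)$, and substituting this into the Lyapunov--Perron representation of $\phi - \phi_\star$ (with integrals now on $(-\infty, \tau]$ and $(0,\tau]$) followed by integration by parts moves $\p_\tau$ off $\psi$ onto $e^{(\tau-s)A}P_{u/s}D_2\tilde F(W_\star^h,0,\ep)$. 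Boundary terms carry an explicit $\ep$ from $\ep J^{-1}$, and the remaining integrals are controlled by the uniform bound $|J^{-1}|_{L(Y,Y_1)} \leq 2$ from \eqref{eq2.5.2} together with \eqref{eq4.15}. The subtle point here compared with the center-stable case is that the $e^{(\tau-s)A}$ decays only on the correct subspace for each of the two integration ranges, so one must split carefully as in \eqref{eq4.20}. Estimate \eqref{eq4.22} for the stable manifold is simpler: since no cut-off is performed (Section \ref{IM}), directly comparing the fixed point of $\mathscr{T}_u^\ep$ with that of $\mathscr{T}_u^\star$ in the exponentially weighted spaces produces the $C^1$ difference bound $C e^{\alpha(\ep)}$, and $h_{cu}(0,\ep)=0$ follows from uniqueness together with \eqref{eq3.16}.
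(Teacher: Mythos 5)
Your proposal is correct and matches the paper's intent exactly: the paper in fact gives no explicit proof of Theorem \ref{thm4.8}, introducing it with ``We have similar results for center-unstable and stable manifolds,'' and your time-reversed rerun of the Theorem \ref{thm4.7} argument (weights $\eta'\in[-2\eta,-\eta]$, the backward analogues of \eqref{eq4.16}, \eqref{eq4.17}, and the integration-by-parts step around \eqref{eq4.19}--\eqref{eq4.20}) is precisely the intended content. Your observation about being careful with which of $e^{(\tau-s)A}P_s$, $e^{(\tau-s)A}P_u$ decays on each of the two integration ranges is the one point that actually needs attention when flipping time, and you have identified it correctly.
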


By taking the intersection of $\mathcal{M}_{cs}^\ep$ and
$\mathcal{M}_{cu}^\ep$, one can obtain a center manifold
$\mathcal{M}_c^\ep$.

\begin{theorem}\label{thm4.9}
Assume (H). There exist $r>0$, $\ep_0>0$ and mappings
$\Psi=(\Psi_s,\Psi_u):B_r(0,Y_1)\times(0,\ep_0)\longrightarrow
X^s\times X^u$ such that its graph $\mathcal{M}_{c}^\ep$ forms a
local center manifold of the origin. Moreover, $\Psi$
is $C^2$ in $W_c$ with norms independent of $\ep$ and
\begin{equation}\label{eq4.25}
\Psi(0,\ep)=0\ , \ |D\Psi(0,\ep)|\leq C\ep.
\end{equation}
\end{theorem}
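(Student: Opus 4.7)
The plan is to obtain $\mathcal{M}_c^\ep$ as the transverse intersection of the center--stable manifold $\mathcal{M}_{cs}^\ep$ and the center--unstable manifold $\mathcal{M}_{cu}^\ep$ from Theorems~\ref{thm4.7} and \ref{thm4.8}. A point $(W^s,W^u,W^c)\in X^s\times X^u\times Y_1$ lies on both manifolds exactly when
\[
W^u=h_u(W^s,W^c,\ep) \quad \text{and}\quad W^s=h_s(W^u,W^c,\ep).
\]
Viewing this as a fixed point problem in $(W^s,W^u)$ parameterized by $W^c$, I would introduce
\[
\mathcal{K}_\ep(W^s,W^u;W^c):=\big(h_s(W^u,W^c,\ep),\, h_u(W^s,W^c,\ep)\big)
\]
and seek the unique $(W^s,W^u)=\mathcal{K}_\ep(W^s,W^u;W^c)$.

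Because $h_u,h_s$ are $C^2$ with uniform bounds in $\ep$ and vanish at the origin, by shrinking the radius $r$ I can make the first partial derivatives $D_1 h_u$ and $D_1 h_s$ arbitrarily small on the relevant neighborhoods; hence $\mathcal{K}_\ep$ becomes a contraction in $(W^s,W^u)$ uniformly in $W^c\in B_r(0,Y_1)$ and $\ep\in(0,\ep_0)$. The contraction mapping theorem then produces a unique fixed point $\Psi(W^c,\ep)=(\Psi_s,\Psi_u)(W^c,\ep)$. Equivalently, the implicit function theorem applies to $(W^s,W^u,W^c,\ep)\mapsto(W^s-h_s(W^u,W^c,\ep),\,W^u-h_u(W^s,W^c,\ep))$, whose Jacobian in $(W^s,W^u)$ at the origin is close to the identity; this simultaneously delivers $C^2$ regularity of $\Psi$ in $W^c$ with norms controlled independently of $\ep$.

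The equality $\Psi(0,\ep)=0$ is immediate from $h_u(0,0,\ep)=h_s(0,0,\ep)=0$. For the key estimate $|D\Psi(0,\ep)|\leq C\ep$ I would implicitly differentiate the fixed point equations and evaluate at $W^c=0$, yielding the linear system
\[
D\Psi_s=D_1h_s(0,0,\ep)\,D\Psi_u+D_2h_s(0,0,\ep),\quad D\Psi_u=D_1h_u(0,0,\ep)\,D\Psi_s+D_2h_u(0,0,\ep).
\]
By the normal--direction bounds $|D_2h_u(W_s,0,\ep)|,\,|D_2h_s(W_u,0,\ep)|\leq C\ep$ from \eqref{eq4.10} and \eqref{eq4.21}, together with the smallness of the off--diagonal factors $D_1h_{u,s}(0,0,\ep)$, this $2\times 2$ block linear system is invertible with inverse close to the identity, which yields $|D\Psi(0,\ep)|=O(\ep)$.

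Finally, $\mathcal{M}_c^\ep:=\{(\Psi_s(W^c,\ep),\Psi_u(W^c,\ep),W^c):W^c\in B_r(0,Y_1)\}$ is locally invariant under \eqref{eq3.14} as the intersection of two locally invariant manifolds and contains the origin, hence it is a local center manifold of $(0,0)$. The main technical point is to choose $r$ and $\ep_0$ uniformly so that the contractivity of $\mathcal{K}_\ep$, the $C^2$ regularity, and the small--derivative estimate $|D\Psi(0,\ep)|=O(\ep)$ all hold simultaneously; this is secured by the $\ep$--uniform bounds and the freedom to narrow the cutoff functions used in constructing $h_u$ and $h_s$ so that $D_1 h_{u,s}$ is small on the neighborhoods under consideration.
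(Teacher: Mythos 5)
Your proposal is correct and follows essentially the same route as the paper: the paper likewise characterizes $\mathcal{M}_c^\ep$ as the intersection $\mathcal{M}_{cs}^\ep\cap\mathcal{M}_{cu}^\ep$, solves the coupled system $W_s=h_s(W_u,W_c,\ep)$, $W_u=h_u(W_s,W_c,\ep)$ by the contraction mapping principle, and then obtains $|D\Psi(0,\ep)|\le C\ep$ by implicit differentiation using the last inequalities in \eqref{eq4.10} and \eqref{eq4.21}. Your extra remarks about shrinking $r$ and the cutoff construction to keep $D_1h_{u,s}$ small, and about inverting the resulting $2\times2$ block linear system, are exactly the details the paper leaves implicit.
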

\begin{proof}
The existence of $\Psi$ is equivalent to find solutions of
\[W_s=h_s(W_u,W_c,\ep)\ , \ W_u=h_u(W_s,W_c,\ep),\]
in terms of $(W_c,\ep)$, which can be solved by the contraction mapping principle.
The estimates on $D\Psi$ can be obtained by differentiating the above equations with respect to $W^c$ and using last inequalities in \eqref{eq4.10} and \eqref{eq4.21}, respectively.
\end{proof}
\section{Intersection of center-stable and center-unstable
manifold}\label{Intersection} In this section, we adopt and modify the method in \cite{SZ03} to deal with our singular system \eqref{eq3.14} to prove the intersection of center-stable and
center-unstable manifold of \eqref{eq3.14}, from which we find breathers with exponentially small tails. The idea is to show the Hamiltonian $H$ is
positive definite on the center manifold and use intermediate value theorem to locate intersection points.\\


\noindent {\bf The Hamiltonian $H$ on the center manifold $\mathcal M_c^\ep$.} Recall that the Hamiltonian $H$ defined \eqref{eq2.5.1a} for \eqref{eq2.5.1} is
\[
\begin{aligned}&H(w^h,w_1^h,w^c,w_1^c,\ep)\\
=&\pi\Big(\frac {(w_1^h)^2}{2}-\frac{(w_h)^2}{2}+\frac{f^{\prime\prime\prime}(0)}{32}(w^h)^4\Big)+\frac{1}{\ep^2}\Big(\int_{-\pi}^\pi \frac{\om^2}{2}((-1-\p_{xx})^{\frac 12}w_1^c)^2\\
&\hspace{1cm}+\frac{\om^2}{2}(w_x^c)^2+\frac{F(\ep w^h\sin x+\ep w^c)}{\ep^2}-\ep^2\frac{f^{\prime\prime\prime}(0)}{24}(w^h\sin x)^4-\frac{(w^c)^2}{2}\ dx\Big),
\end{aligned}\]
where $\om^2=1-\ep^2$. We also recall that we obtain a sequence of symplectic transformations $\Gamma_j$ in Section \ref{Tr}, where $2\leq j\leq[\frac c\ep]$, and a center manifold
\[\mathcal{M}_c^\ep=\{W_c+\Psi(W_c,\ep)\big||W_c|_{Y_1}\leq r,\ep\in(0,\ep_0)\}\]
for \eqref{eq3.14} in Section \ref{IM}. Let 
\[I+\bar\Gamma\triangleq \Big((I+\Gamma_{[\frac c\ep]})\circ\cdots\circ(I+\Gamma_2)\Big)^{-1}.\]
Since $\Gamma_j=O(\frac{\ep^2}{2^k})$, it follows $\bar\Gamma=O(\ep^2)$.
Let
\begin{equation}\label{eq5.2.1}
\tilde H(W^h,W^c,\ep)\triangleq
H((I+\bar\Gamma)\circ(W^h,W^c),\ep).
\end{equation}
Since $H$ is quadratic, we have $\tilde H(0,0,\ep)=0,D\tilde H(0,0,\ep)=0$, which implies
\begin{equation}\label{eq5.2.2}
\tilde H\Big|_{\mathcal M_s^\ep,\mathcal M_u^\ep}=0.
\end{equation}
\begin{lemma}\label{lemma5.1}
There exists $b>0$ and $\ep_0>0$ such that for any
$|W^c|_{Y_1}\leq b,\ep\in(0,\ep_0)$, we have
\begin{equation}\label{eq5.3}
\frac 15\leq\frac{\ep^2 \tilde H(W^c,\ep)}{|W^c|_{Y_1}^2}\leq 1.
\end{equation}
\end{lemma}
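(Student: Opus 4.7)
The idea is to isolate the dominant quadratic part of $\ep^2\tilde H$ on the center manifold and show it is pinched between universal fractions of $|W^c|_{Y_1}^2$, while all other corrections are strictly lower order in $\ep$ and $|W^c|_{Y_1}$. By Theorem~\ref{thm4.9}, the center manifold is parametrized as $W^h=\Psi(W^c,\ep)$ with $\Psi(0,\ep)=0$ and $|D\Psi(0,\ep)|\leq C\ep$, so $|\Psi(W^c,\ep)|_X\leq C\ep|W^c|_{Y_1}$ for $|W^c|_{Y_1}\leq b$. Since $\bar\Gamma(0,0,\ep)=0$ and $\bar\Gamma=O(\ep^2)$ from Section~\ref{Tr}, the image $(w^h,w_1^h,w^c,w_1^c)=(I+\bar\Gamma)(\Psi(W^c,\ep),W^c)$ in original coordinates satisfies $w^h,w_1^h=O(\ep|W^c|_{Y_1})$ and $(w^c,w_1^c)=W^c+O(\ep^2|W^c|_{Y_1})$ in the $Y_1$-norm.

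Substituting into \eqref{eq2.5.1a} and multiplying by $\ep^2$, the hyperbolic block contributes $\ep^2\pi[\tfrac12(w_1^h)^2-\tfrac12(w^h)^2+\tfrac{f'''(0)}{32}(w^h)^4]=O(\ep^4|W^c|_{Y_1}^2)$. Since $f$ is odd with $f'(0)=0$, one has $F(u)=\tfrac{f'''(0)}{24}u^4+O(u^6)$, so the nonlinear integrand $\tfrac{F(\ep w^h\sin x+\ep w^c)}{\ep^2}-\ep^2\tfrac{f'''(0)}{24}(w^h\sin x)^4$ is pointwise $O(\ep^2(|w^h|+|w^c|)^4)=O(\ep^2|W^c|_{Y_1}^4)$. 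The dominant contribution to $\ep^2\tilde H$ is therefore the quadratic form
\[Q(w^c,w_1^c)\triangleq\int_{-\pi}^\pi\Bigl[\tfrac{\om^2}{2}(Lw_1^c)^2+\tfrac{\om^2}{2}(w_x^c)^2-\tfrac{(w^c)^2}{2}\Bigr]\,dx,\]
and bilinearity together with $(w^c,w_1^c)=W^c+O(\ep^2|W^c|_{Y_1})$ gives $Q(w^c,w_1^c)=Q(W^c)+O(\ep^2|W^c|_{Y_1}^2)$.

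Expanding $w^c=\sum_{k\geq2}a_k\sin(kx)$, $w_1^c=\sum_{k\geq2}b_k\sin(kx)$, one has $|W^c|_{Y_1}^2=\pi\sum_{k\geq2}k^2(a_k^2+b_k^2)$ and
\[Q(W^c)=\tfrac{\pi}{2}\sum_{k\geq2}\bigl[(1-\ep^2)(k^2-1)b_k^2+(k^2-1-\ep^2k^2)a_k^2\bigr].\]
For every $k\geq2$ the mode ratios $(1-\ep^2)(k^2-1)/(2k^2)$ and $(k^2-1-\ep^2k^2)/(2k^2)$ both lie in $[\tfrac38-\tfrac{\ep^2}{2},\,\tfrac{1-\ep^2}{2}]$, so $Q(W^c)/|W^c|_{Y_1}^2$ lies in the same interval. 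Combined with the $O(\ep^2+\ep^2|W^c|_{Y_1}^2)$ error above, for $b$ and $\ep_0$ sufficiently small the total ratio $\ep^2\tilde H(W^c,\ep)/|W^c|_{Y_1}^2$ lies comfortably inside $[1/5,1]$.

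The main obstacle is to prevent the $1/\ep^2$ prefactor on the elliptic part of $H$ from amplifying the coordinate shift $\bar\Gamma$ into an $O(1)$ error. What saves us is that $\bar\Gamma$ not only has size $O(\ep^2)$ but also vanishes at the origin: tracing the construction in Section~\ref{Tr}, each $\Gamma_k$ is built from $L^{-1}G$ or $L^{-1}\tilde G_{k-1}$, and both $G$ and $\tilde G_k$ vanish at $W=0$. Hence $\bar\Gamma$ evaluated on points of size $|W^c|_{Y_1}$ has size $\ep^2|W^c|_{Y_1}$, so after contraction with the $O(1/\ep^2)$ Hessian and remultiplication by $\ep^2$, the correction is $O(\ep^2|W^c|_{Y_1}^2)$, safely negligible against $Q(W^c)\sim|W^c|_{Y_1}^2$.
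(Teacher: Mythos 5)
Your proof is correct and follows essentially the same approach as the paper: isolate the elliptic quadratic form as the leading order of $\ep^2\tilde H$ on the center manifold and bound its ratio to $|W^c|_{Y_1}^2$ mode by mode, with the hyperbolic and quartic nonlinear contributions pushed into higher-order errors. The paper's two-line proof simply asserts this; you supply the bookkeeping it elides (the $O(\ep|W^c|_{Y_1})$ size of $W^h=\Psi(W^c,\ep)$ from Theorem~\ref{thm4.9}, and the observation that $\bar\Gamma$ vanishes at the origin so the $1/\ep^2$ prefactor does not amplify the coordinate change into an $O(1)$ error), and your per-mode interval $[\tfrac38-\tfrac{\ep^2}{2},\,\tfrac{1-\ep^2}{2}]$ is a sharper version of the paper's stated $[\tfrac{3}{10},\tfrac12]$.
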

\begin{proof}
By TaylorÕs expansion, for sufficiently small $b$ and $\ep$, the leading order of $\ep^2\tilde H$ is given by
\[\frac 12\int_{-\pi}^{\pi}\big((-1-\p_{xx})^{\frac
12}y_1\big)^2+y_x^2-y^2\ dx,\]
which satisfies
\[\frac 15<\frac{3}{10}\leq\frac{\frac 12\int_{-\pi}^{\pi}\big((-1-\p_{xx})^{\frac
12}y_1\big)^2+y_x^2-y^2\ dx}{|y|_{Y_1}^2+|y_1|_{Y_1}^2}\leq\frac12<1.\]
Therefore, the proof is completed.
\end{proof}
Combining the above lemma and \eqref{eq5.2.2}, we have
\begin{equation}\label{eq5.3.1}
\tilde H\Big|_{\mathcal M_{cs}^\ep,\mathcal M_{cu}^\ep}\geq0\ , \ \tilde H\Big|_{\mathcal M_{cs}^\ep\backslash\mathcal M_{s}^\ep,\mathcal M_{cu}^\ep\backslash\mathcal M_u^\ep}>0.
\end{equation}

To study the intersection of center-stable and center-unstable manifolds, we build
up a coordinate system around the unperturbed homoclinic orbit
$h(\tau)$ given in \eqref{eq2.10}.\\

\noindent {\bf Coordinates System near $h$.} First we choose
$x_0\in h$ and let $v(x_0)$ be the vector field of \eqref{eq2.9}
at $x_0$. Let $d\triangleq
DH_0(x_0)$, where 
\[H_0(w^h,w_1^h)=\frac 12((w_1^h)^2-(w^h)^2)+\frac{f^{\prime\prime\prime}(0)}{32}(w^h)^4.\] Since $H_0$ is invariant along any solution of \eqref{eq2.9}, we have $DH_0(x_0)\perp v(x_0)$. Let $\Sigma\triangleq\mbox{Span}\{d\}\oplus Y_1$
and $P_v,P_d,P_{Y_1}$, which are linear projections onto subspaces
$\{\mathbb{R}v\},\{\mathbb{R}d\}$ and $Y_1$, respectively. Note that
a point $p\in\Sigma$ if and only if $P_v(p-x_0)=0$. With slight
abuse of notation, we also use $\mathcal{M}_\ep^\beta$ to denote the
global invariant manifolds extended from the local ones by the flow
map of \eqref{eq3.14}, where $\beta=cs,cu,c,s,u$. Let
$\tilde{\mathcal{M}}_\ep^\beta\triangleq\mathcal{M}_\ep^\beta\cap(X\times
B_{Ce^{\alpha(\ep)}}(0,Y_1))\cap\Sigma$. We claim that
$\tilde{\mathcal{M}}_\ep^\beta$ can be written as local graphs in
the following lemma. We will only present some key points in the
proof and a more detailed presentation can be found in Section 6.1
and 6.3 of \cite{LZ10}.
\begin{lemma}\label{le5.4}
For any $b>0$, there exists $\ep_0>0$ such that for $\tilde
Y=(y,y_1)\in B_b(0,Y_1)$ and $\ep\in[0,\ep_0]$,
$\tilde{\mathcal{M}}_\ep^{cs,cu,s,u}$ contains some local graphs, namely,
\[
\tilde{\mathcal{M}}_\ep^{cs}\supset \{x_0+e^{\alpha(\ep)}\tilde
Y+\Upsilon^d(\tilde Y,\ep)\}\ ,\
\tilde{\mathcal{M}}_\ep^{cu}\supset \{x_0+e^{\alpha(\ep)}\tilde
Y+\Upsilon_1^d(\tilde Y,\ep)\}.\]
Moreover, there exists $C>0$ independent of $\ep$ such that
\begin{equation}\label{eq5.5}
|D\Upsilon^d(\cdot,\ep)|_{C^0(B_b(0,Y_1))}+|D\Upsilon_1^d(\cdot,\ep)|_{C^0(B_b(0,Y_1))}\leq
Ce^{\alpha(\ep)}.
\end{equation}
\end{lemma}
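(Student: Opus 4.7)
The plan is to construct the graphs by parameterizing the global center–stable and center–unstable manifolds near $x_0$ and then imposing the section constraint via the implicit function theorem; the scaling $e^{\alpha(\ep)}$ will appear only at the reparameterization step. I describe the argument for $\tilde{\mathcal M}^{cs}_\ep$; the case of $\tilde{\mathcal M}^{cu}_\ep$ is symmetric, and the pure stable/unstable cases are simpler since they sit inside their center counterparts.

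First I would extend the local center-stable manifold $\mathcal M^\ep_{cs}$ of Theorem~\ref{thm4.7} to a neighborhood of $x_0$ by flowing it by \eqref{eq3.14}. Since $x_0\in h$ lies on the unperturbed homoclinic, there is a (backward) time $\tau_0$ and a point $p_0\in\mathcal M^\star_s$ close to the origin such that the unperturbed flow of \eqref{eq4.1} carries $p_0$ to $x_0$ in time $\tau_0$. Using the local graph description $\{W_s+W_c+h_u(W_s,W_c,\ep)\}$ and the flow $\Phi^\tau_\ep$ of \eqref{eq3.14}, I obtain a smooth map
\[
\Theta_\ep:(W_s,W_c)\longmapsto \Phi^{\tau_0}_\ep\big(W_s+W_c+h_u(W_s,W_c,\ep)\big),
\]
defined for $(W_s,W_c)$ in a uniform neighborhood of $(p_0,0)$ in $X^s\times Y_1$, whose image is an open piece of $\mathcal M^\ep_{cs}$ containing $\Theta_\ep(p_0,0)$. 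By Theorem~\ref{thm4.3} and the $C^1$ estimate in \eqref{eq4.10}, $\Theta_\ep(p_0,0)=x_0+O(e^{\alpha(\ep)})$ and $D\Theta_\ep(p_0,0)$ is $O(e^{\alpha(\ep)})$--close to $D\Theta_0(p_0,0)$.

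Next I would impose the section condition. At $\ep=0,W_c=0$, the image $\Theta_0(\cdot,0)$ is a curve with tangent $v(x_0)$ at $p_0$, and by Theorem~\ref{thm4.5} the same tangent direction (up to $O(\ep)$) is realized by $\partial_{W_s}\Theta_\ep$; moreover $\partial_{W_c}\Theta_\ep$ is essentially the identity on $Y_1$. Since $v(x_0)\notin\Sigma$ and $\Sigma\supset Y_1$, the $P_v$-component of $\partial_{W_s}\Theta_\ep$ is nonzero and bounded below uniformly for small $\ep$. Hence the equation
\[
P_v\big(\Theta_\ep(W_s,W_c)-x_0\big)=0
\]
is uniformly solvable by the implicit function theorem: there exist $b>0,\ep_0>0$ and a $C^2$ map $W_s=\sigma(W_c,\ep)$ defined for $W_c\in B_b(0,Y_1)$ and $\ep\in[0,\ep_0]$, with $|D_{W_c}\sigma|_{C^0}\leq C$ uniformly in $\ep$, cutting out $\Theta_\ep^{-1}(\Sigma)$ as a graph.

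Finally I would read off the graph description and rescale. Plugging $\sigma$ back into $\Theta_\ep$ gives a parameterization of $\mathcal M^\ep_{cs}\cap\Sigma$ near $x_0$:
\[
W_c\longmapsto\Theta_\ep(\sigma(W_c,\ep),W_c)=x_0+ W_c+\Xi(W_c,\ep),
\]
where $\Xi$ takes values in $\mathrm{Span}\{d\}$ (by the $P_v=0$ condition) and, by \eqref{eq4.10} combined with Theorem~\ref{thm4.3}, satisfies $|\Xi(0,\ep)|=O(e^{\alpha(\ep)})$ and $|D_{W_c}\Xi|_{C^0}=O(1)$ uniformly in $\ep$. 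Intersecting with $X\times B_{Ce^{\alpha(\ep)}}(0,Y_1)$ forces $|W_c|_{Y_1}\leq Ce^{\alpha(\ep)}$, so I set $W_c=e^{\alpha(\ep)}\tilde Y$ with $\tilde Y\in B_b(0,Y_1)$ and define $\Upsilon^d(\tilde Y,\ep)=\Xi(e^{\alpha(\ep)}\tilde Y,\ep)$. The chain rule immediately yields $|D\Upsilon^d|_{C^0}\leq e^{\alpha(\ep)}|D_{W_c}\Xi|_{C^0}\leq Ce^{\alpha(\ep)}$, which is \eqref{eq5.5}.

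The main obstacle I anticipate is producing the uniform (in $\ep$) lower bound on the transversality constant $P_v\partial_{W_s}\Theta_\ep$ on a domain independent of $\ep$; this requires controlling the derivative of the flow of the singular system \eqref{eq3.14} over the fixed time $\tau_0$, with the delicate part being the $O(\ep)$ error in the hyperbolic tangent direction from Theorem~\ref{thm4.5} rather than the $O(e^{\alpha(\ep)})$ center error. Once this uniform bound is in hand, the implicit function theorem and the rescaling are routine.
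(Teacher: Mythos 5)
Your overall strategy --- flow out the local center-stable manifold to a neighborhood of $x_0$, impose the section condition $P_v(\cdot-x_0)=0$ by the implicit function theorem, then rescale by $e^{\alpha(\ep)}$ --- is the same strategy the paper uses. The paper anchors by fixing $W_s$ and solving for the flow time $\tau(\cdot,\ep)$ and an auxiliary shift $a(\cdot,\ep)$, whereas you fix the time $\tau_0$ and solve for $W_s=\sigma(W_c,\ep)$; either version of the slicing works once the uniform transversality $P_v\partial_{W_s}\Theta_\ep\neq 0$ is in hand, and your argument for that transversality (via Theorem~\ref{thm4.5} and the fact that at $\ep=0$ the tangent to $\mathcal M_s^\star$ at $x_0$ is exactly $v(x_0)$) is correct.

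There is, however, a genuine gap at the decomposition step. You write the sliced graph as
\[
\Theta_\ep(\sigma(W_c,\ep),W_c)=x_0+W_c+\Xi(W_c,\ep),
\]
with $\Xi$ valued in $\mathrm{Span}\{d\}$, i.e.\ you assert $P_{Y_1}\big(\Theta_\ep(\sigma(W_c,\ep),W_c)-x_0\big)=W_c$. This is false: $\Theta_\ep$ flows forward for an $O(1)$ time $\tau_0$, and over that interval the $Y_1$-component is transported by the evolutionary operator $E(\tau_0,0;\ep)$ generated by $\frac J\ep+\bar G(W_\star^h,0,\ep)$ (see the linearization \eqref{eq4.6}), not by the identity. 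Hence the $Y_1$-component of the output is $\approx E(\tau_0,0;\ep)W_c+O(e^{\alpha(\ep)})$, which is not $W_c$, and as it stands your formula does not parameterize $\tilde{\mathcal M}_\ep^{cs}$ in the form $x_0+e^{\alpha(\ep)}\tilde Y+\Upsilon^d(\tilde Y,\ep)$ required by the lemma, where $\tilde Y$ must equal the actual $Y_1$-component divided by $e^{\alpha(\ep)}$.

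The fix is exactly the extra inversion the paper performs: show that the map $W_c\mapsto P_{Y_1}\big(\Theta_\ep(\sigma(W_c,\ep),W_c)-x_0\big)$ has derivative $E(\tau_0,0;\ep)+O(\ep)$ (using Theorem~\ref{thm4.5} and $|D_2h_u|\leq C\ep$ from \eqref{eq4.10}), and then invert it using the fact that $|E(\tau_0,0;\ep)^{\pm1}|_{L(Y_1,Y_1)}\leq e^{C\tau_0}$ uniformly in $\ep$. This is what the paper does by solving $\frac{1}{e^{\alpha(\ep)}}P_{Y_1}\tilde\varphi(\overline Y,\ep)=\tilde Y$ and defining $\Upsilon^d(\tilde Y,\ep)=P_d\tilde\varphi\big((\tfrac{1}{e^{\alpha(\ep)}}P_{Y_1}\tilde\varphi)^{-1}\tilde Y,\ep\big)$. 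Once the inverse $\mathcal I$ is inserted, your chain rule becomes $D\Upsilon^d=D\Xi\cdot D\mathcal I\cdot e^{\alpha(\ep)}$, and since both $|D\Xi|$ and $|D\mathcal I|$ are $O(1)$, the bound \eqref{eq5.5} follows as you claimed. The rest of your outline is sound; you are only missing this reparameterization of the center coordinate before rescaling.
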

\begin{proof}
Let $\varphi(\tau,\cdot,\ep)$ and $\varphi^\star(\tau,\cdot)$ be the flow
maps of \eqref{eq3.14} and \eqref{eq4.1}, respectively. Fix $W_s\in
B_r(0,X^s)$, there exists $\tau_0$ such that
\[P_v(\varphi^{\star}(\tau_0,W_s+h_u^\star(W_s))-x_0)=0.\]
By Theorem \ref{thm4.3} and \ref{thm4.5}, for arbitrary $b'>0$,
there exists
\[\big(a(\cdot),\tau(\cdot)\big):B_{b'}(0,Y_1)\times[0,\ep_0)\longrightarrow
X^s\times\mathbb{R},\] such that for $\overline Y\in B_{b'}(0,Y_1)$,
\[\begin{aligned}
&P_v\tilde \varphi(\overline Y,\ep)\triangleq
P_v\big(\varphi(\tau(\overline Y,\ep),W_s+a(\overline
Y,\ep)+e^{\alpha(\ep)}\overline Y+h_u(W_s+a(\overline
Y,\ep)+e^{\alpha(\ep)}\overline Y,\ep))-x_0\big)=0.
\end{aligned}\] For any
$b>0$ and $\tilde Y\in B_b(0,Y_1)$, we consider equation
\begin{equation}\label{eq5.6.1}
\frac{1}{e^{\alpha(\ep)}}P_{Y_1}\tilde \varphi(\overline Y,\ep)=\tilde Y.
\end{equation}
Based on Theorem \ref{eq4.5} and \eqref{eq4.10}, we have for $\delta
Y\in Y_1$,
\[
\frac{1}{e^{\alpha(\ep)}}\big|DP_{Y_1}\tilde \varphi(\overline
Y,\ep)\delta Y -E(t_0,0)e^{\alpha(\ep)}\delta
Y\big|\leq C\ep,
\] where $E$ is the evolutionary
operator generated by \eqref{eq4.6} in the normal direction. Since
$E$ is invertible with norm independent of $\ep$, \eqref{eq5.6.1}
has a unique solution for each $\tilde Y\in B_b(0,Y_1)$. Now we
define
\[\Upsilon^d(\tilde Y,\ep)=P_d\tilde \varphi((\frac{1}{e^{\alpha(\ep)}}P_{Y_1}\tilde \varphi)^{-1}\tilde Y,\ep),\]
and $\Upsilon^d_1$ can be obtained in a similar way. Clearly,
\eqref{eq5.5} follows from the rescaling.
\end{proof}

For each $\ep$, since $P_{Y_1}(\tilde {\mathcal M}_\ep^s-x_0)$ and $P_{Y_1}(\tilde {\mathcal M}_\ep^u-x_0)$ are
$O(e^{\alpha(\ep)})$, the following quantities
\begin{equation}\label{eq5.6.2}
\tilde Y(\ep)=\frac{1}{e^{\alpha(\ep)}}(P_{Y_1}\tilde {\mathcal M}_\ep^s-x_0)\ , \
\tilde
Y_1(\ep)=\frac{1}{e^{\alpha(\ep)}}(P_{Y_1}\tilde {\mathcal M}_\ep^u-x_0),\end{equation} are
well defined. Consequently,
\[\tilde{\mathcal{M}}_\ep^s=\{x_0+\tilde Y(\ep)+\Upsilon^d(\tilde Y(\ep),\ep)\}\ , \
\tilde{\mathcal{M}}_\ep^u=\{x_0+\tilde Y_1(\ep)+\Upsilon^d(\tilde
Y_1(\ep),\ep)\}.\]
\\
\noindent{\bf The intersection of $\tilde {\mathcal{M}}_\ep^{cs}$
and $\tilde {\mathcal{M}}_\ep^{cu}$.} For $s\in[0,1]$ and $\tilde Y(\ep),\tilde Y_1(\ep)\in B_b(0,Y_1)$ given in \eqref{eq5.6.2}, we let
\begin{eqnarray*}
&& q^\ep(s)=s\tilde Y(\ep)+(1-s)\tilde Y_1(\ep),\\
&&
p^\ep(s)=x_0+e^{\alpha(\ep)}q^\ep(s)+\Upsilon^d(q^\ep(s),\ep)\in\tilde{\mathcal{M}}_\ep^{cs},\\
&&p_1^\ep(s)=x_0+e^{\alpha(\ep)}q^\ep(s)+\Upsilon_1^d(q^\ep(s),\ep)\in\tilde{\mathcal{M}}_\ep^{cu},\\
&& h(s)=\tilde H(p^\ep(s),\ep)-\tilde H(p_1^\ep(s),\ep),
\end{eqnarray*}
where $\tilde H$ is defined in \eqref{eq5.2.1}.

The intersection of center-stable and center-unstable
manifold is given by $\Upsilon^d=\Upsilon_1^d$, which is equivalent to $h(s)=0$. This is because the leading order of $D\tilde H(\mathcal {\tilde M}_\ep^{cs(cu)})$ is given by $DH_0(x_0)$. By \eqref{eq5.3.1},
\[\begin{aligned}h(1)=&\tilde H(x_0+e^{\alpha(\ep)}\tilde Y(\ep)+\Upsilon^d(\tilde Y(\ep),\ep))-\tilde H(x_0+e^{\alpha(\ep)}\tilde Y(\ep)+\Upsilon_1^d(\tilde Y(\ep),\ep))\\
\leq&0\leq  \tilde H(x_0+e^{\alpha(\ep)}\tilde Y_1(\ep)+\Upsilon^d(\tilde Y_1(\ep),\ep))-\tilde H(x_0+e^{\alpha(\ep)}\tilde Y_1(\ep)+\Upsilon_1^d(\tilde Y_1(\ep),\ep))= h(0).\end{aligned}\] 
The intermediate value theorem implies $h(s_0)=0$
for some $s_0\in[0,1]$. Finally, it is easy to see $\tilde
H(p^\ep(s_0),\ep)\leq Ce^{\alpha(\ep)}$, which implies the tail of the perturbed breather is exponentially small in $\ep$. We summarize results from above analysis in the following theorem.
\begin{theorem}\label{thm5.2}
Assume (H). There exists $\ep_0>0$ such that for any
$\ep\in(0,\ep)$, $\mathcal{M}_\ep^{cs}$ and $\mathcal{M}_\ep^{cu}$
of \eqref{eq3.14} have a nonempty intersection. The
solutions lying on the intersection converge both forward and
backward in time to fast oscillatory solutions with exponentially small amplitudes on the center
manifold.
\end{theorem}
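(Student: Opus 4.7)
The plan is to recast the intersection problem as a scalar shooting problem on the Poincaré section $\Sigma = \mathrm{Span}\{d\}\oplus Y_1$ and use the transformed Hamiltonian $\tilde H$ as a test functional detecting that scalar. By Lemma \ref{le5.4}, both $\tilde{\mathcal M}_\ep^{cs}$ and $\tilde{\mathcal M}_\ep^{cu}$ are parameterized as graphs $\Upsilon^d(\tilde Y,\ep)$ and $\Upsilon_1^d(\tilde Y,\ep)$ over the rescaled center coordinate $\tilde Y\in B_b(0,Y_1)$. The pure stable and unstable manifolds correspond to two distinguished parameters $\tilde Y(\ep),\tilde Y_1(\ep)\in B_b(0,Y_1)$ as recorded in \eqref{eq5.6.2}. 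Intersecting $\tilde{\mathcal M}_\ep^{cs}$ and $\tilde{\mathcal M}_\ep^{cu}$ at a common center-coordinate amounts to solving the scalar equation $\Upsilon^d(q,\ep)=\Upsilon_1^d(q,\ep)$ for $q$, and this is what the IVT-based argument will produce.

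First, I would interpolate linearly between the two distinguished parameters by setting $q^\ep(s)=s\tilde Y(\ep)+(1-s)\tilde Y_1(\ep)$ for $s\in[0,1]$, and define
\[
p^\ep(s)=x_0+e^{\alpha(\ep)}q^\ep(s)+\Upsilon^d(q^\ep(s),\ep),\qquad p_1^\ep(s)=x_0+e^{\alpha(\ep)}q^\ep(s)+\Upsilon_1^d(q^\ep(s),\ep),
\]
which lie on $\tilde{\mathcal M}_\ep^{cs}$ and $\tilde{\mathcal M}_\ep^{cu}$, respectively, and share the same $Y_1$-component on $\Sigma$. Then I would introduce the scalar energy difference $h(s)=\tilde H(p^\ep(s),\ep)-\tilde H(p_1^\ep(s),\ep)$. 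Because $\tilde H=H\circ(I+\bar\Gamma)$ with $\bar\Gamma=O(\ep^2)$ and because $DH(x_0)=DH_0(x_0)=d$ is, by construction, transverse to $\mathrm{Span}\{v(x_0)\}\oplus Y_1$, the leading order of $p^\ep(s)\mapsto \tilde H(p^\ep(s),\ep)$ in the $d$-direction is nondegenerate; consequently, $h(s_0)=0$ is equivalent (at leading order, uniformly in $\ep$) to $\Upsilon^d(q^\ep(s_0),\ep)=\Upsilon_1^d(q^\ep(s_0),\ep)$.

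The endpoint signs of $h$ come from \eqref{eq5.2.2} and \eqref{eq5.3.1}. At $s=1$ the point $p^\ep(1)$ lies on $\tilde{\mathcal M}_\ep^s$, so $\tilde H(p^\ep(1),\ep)=0$, while $p_1^\ep(1)\in\tilde{\mathcal M}_\ep^{cu}$ gives $\tilde H(p_1^\ep(1),\ep)\ge 0$; hence $h(1)\le 0$. Symmetrically at $s=0$ one finds $h(0)\ge 0$. Since $h$ is continuous in $s$, the intermediate value theorem produces $s_0\in[0,1]$ with $h(s_0)=0$, which, by the transversality above, furnishes a point in $\tilde{\mathcal M}_\ep^{cs}\cap\tilde{\mathcal M}_\ep^{cu}$. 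Propagating this point by the flow of \eqref{eq3.14} forward and backward in time yields an orbit asymptotic to $\mathcal M_\ep^c$ in both time directions; the $O(e^{\alpha(\ep)})$ size of the $Y_1$-component at the intersection, together with the quantitative lower bound of Lemma \ref{lemma5.1}, forces $|W^c|_{Y_1}\le Ce^{\alpha(\ep)}$ on the limiting center orbit, which is the exponentially small tail claimed.

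The main obstacle is the last logical step: ensuring that $h(s_0)=0$ truly detects the geometric intersection and is not an accidental cancellation coming from the $\bar\Gamma$ correction or from the nontrivial dynamics in the $Y_1$-directions. Concretely, one must show that, along the $d$-direction, the map $(\Upsilon^d-\Upsilon_1^d)\mapsto \tilde H(p^\ep)-\tilde H(p_1^\ep)$ has a derivative uniformly bounded below in $\ep$, while the error from the $O(\ep)$-bounds on $D\Upsilon^d,D\Upsilon_1^d$ in \eqref{eq5.5} and from $\bar\Gamma=O(\ep^2)$ stays strictly subleading. This is handled by computing $D\tilde H(x_0,\ep)=d+O(\ep^2)$ and using that $|d|$ is an $\ep$-independent positive constant on the Duffing homoclinic $h$, together with the fact that $\tilde H$ restricted to the $Y_1$-directions is controlled by Lemma \ref{lemma5.1}, so the off-diagonal contributions cannot dominate the $d$-direction contribution that $h(s)$ is designed to measure.
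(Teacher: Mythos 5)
Your proposal matches the paper's proof essentially step for step: the same interpolating path $q^\ep(s)=s\tilde Y(\ep)+(1-s)\tilde Y_1(\ep)$, the same scalar energy difference $h(s)=\tilde H(p^\ep(s),\ep)-\tilde H(p_1^\ep(s),\ep)$, the same endpoint signs from \eqref{eq5.2.2} and \eqref{eq5.3.1}, and the intermediate value theorem, with transversality justified exactly as in the paper by noting that the leading order of $D\tilde H$ along $\tilde{\mathcal M}_\ep^{cs(cu)}$ is $DH_0(x_0)=d$. The only difference is that you spell out the last transversality step and the passage from the $\tilde H$-bound to the $|W^c|_{Y_1}$-bound via Lemma~\ref{lemma5.1} slightly more explicitly, but the argument is the same.
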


Finally, by taking account of all recalings and switching $x$ and $t$ back, we complete the proof of the Main Theorem.
\section*{Acknowledgement}
The author would like to thank Professor Chongchun Zeng for many useful discussions during the preparation of this paper.

\end{document}